\theoremstyle{plain}
  \newtheorem{thm}{Theorem}[section]
  \newtheorem{lem}[thm]{Lemma}
  \newtheorem{cor}[thm]{Corollary}
  \newtheorem{prop}[thm]{Proposition}
  \newtheorem{fact}[thm]{Fact}
\theoremstyle{definition}
  \newtheorem{defn}[thm]{Definition}
  \newtheorem{notation}{Notation\!\!}
\theoremstyle{remark}
\newtheorem{acknowledgment}{Acknowledgment}
\newcommand{\Ktx}{K_{\tilde{X}}}
\newcommand{\Otx}{{\mathcal O}_{\tilde{X}}}
\newcommand{\tr}{\operatorname{tr}}
\newcommand{\bR}{{\bold R}}
\newcommand{\CC}{{\mathbb{C}}}
\newcommand{\ch}{\operatorname{ch}}
\newcommand{\ext}{\operatorname{ext}}
\newcommand{\Ext}{\operatorname{Ext}}
\newcommand{\Hom}{\operatorname{Hom}}
\newcommand{\IIm}{\operatorname{Im}}
\newcommand{\Ker}{\operatorname{Ker}}
\newcommand{\NN}{{\mathbb{N}}}
\newcommand{\NS}{\operatorname{NS}}
\newcommand{\Ox}{{\mathcal O}_X}
\newcommand{\QQ}{{\mathbb{Q}}}
\newcommand{\Spec}{\operatorname{Spec}}
\newcommand{\ZZ}{{\mathbb{Z}}}
\begin{document}
\bibliographystyle{amsplain}
\title[moduli of sheaves on Enriques surfaces]
{Singularities and Kodaira dimension of moduli scheme of stable sheaves
on Enriques surfaces}
\author{Kimiko Yamada}
\email{yamada@xmath.ous.ac.jp}
\address{Department of applied mathematics, Faculty of Science, 
Okayama University of Science, Japan}
\subjclass{Primary 14J60; Secondary 14B05, 14D20, 14Exx}
\thanks{This work was supported by the Grants-in-Aid for 
Young Scientists (B), JSPS, No. 23740037. }
\begin{abstract}
Let $M$ be a moduli scheme of stable sheaves with fixed Chern classes on
an Enriques surface or a hyper-elliptic surface. 
If its expected dimension is 7 or more, then $M$ admits only 
canonical singularities.
Moreover, if $M$ is compact, then its Kodaira dimension is zero.
%
\end{abstract}

\maketitle
\section{Introduction}
Let $X$ be a non-singular projective minimal surface over $\CC$, and
$H$ an ample line bundle on $X$.
There is a coarse moduli scheme $M(H)$ (resp. $\bar{M}(H)$) 
of $H$-stable (resp. $H$-semistable) sheaves with fixed Chern classes 
$(r,c_1,c_2)\in \ZZ_{>0}\times \NS(X)\times \ZZ$, where we assume $r>1$.
Gieseker and Maruyama (\cite{Gi:moduli}, \cite{Ma:moduli2}) proved that
$\bar{M}(H)$ is projective over $\CC$, and $M(H)$ is its open subscheme.
Suppose that $X$ is a minimal surface with $\kappa(X)=0$.
Then $K_X$ is numerically equivalent to $0$, and
there is a covering map $\pi: \tilde{X}\rightarrow X$,
where $\tilde{X}$ is a non-singular projective surface with 
$K_{\tilde{X}}={\mathcal O}_{\tilde{X}}$.
Let $d$ be the
degree of $\pi$. The main result of this article is as follows.
\begin{thm}\label{thm:main}
Let $X$ be a minimal surface with $\kappa(X)=0$.
Suppose that 
\begin{align*}
2rc_2-(r-1)c_1^2-r^2\chi(\Ox) & \geq 3d, \quad \text{that is,} \\
\operatorname{exp dim} M(H) = \ext^1(E,E) -\ext^2(E,E)^{\circ}  & \geq 3d+1+p_g(X).
\end{align*}
%
%
%
Then the moduli scheme $M(H)$ of $H$-stable sheaves with Chern classes
$(r,c_1,c_2)$ is  l.c.i., normal, 1-Gorenstein, 
and admits only canonical singularities.
\end{thm}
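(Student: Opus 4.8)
\emph{Strategy and deformation theory.} The plan is to derive all four conclusions from an \'etale-local (equivalently, formal) description of $M(H)$ at each of its points, via deformation theory. If $d=1$, i.e.\ $X$ is itself a K3 or an abelian surface, then $K_X=\Ox$ and for $H$-stable $E$ one has $\Ext^2(E,E)^{\circ}\cong\Hom(E,E)^{\circ\,\vee}=0$, so $M(H)$ is smooth and there is nothing to prove; assume henceforth $d\ge2$, so $X$ is an Enriques or a hyper-elliptic surface, $p_g(X)=0$, $\omega_X$ is torsion of order $d$, and $\pi\colon\tilde{X}\to X$ is the associated cyclic \'etale cover with $\tilde{X}$ a K3 or abelian surface and $\pi^*\omega_X=\Otx$. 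At a point $[E]$ the tangent space is $\Ext^1(E,E)$ and the obstruction to deforming $E$ lies in $\Ext^2(E,E)$; because we work in characteristic $0$ the Picard scheme is smooth, so the trace of the obstruction --- the obstruction to deforming $\det E$ --- vanishes and the obstruction actually lies in the trace-free part $\Ext^2(E,E)^{\circ}$. Hence, formally locally at $[E]$, $M(H)$ is the zero scheme of a Kuranishi map $\kappa\colon\Ext^1(E,E)\to\Ext^2(E,E)^{\circ}$ with $d\kappa(0)=0$ and quadratic part the Yoneda square (followed by the projection onto $\Ext^2(E,E)^{\circ}$).

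\emph{The obstruction space.} Since $p_g(X)=0$, Serre duality gives $\Ext^2(E,E)^{\circ}=\Ext^2(E,E)\cong\Hom(E,E\otimes\omega_X)^{\vee}$; as $\omega_X$ is torsion, hence of the same slope as $\Ox$, $H$-stability forces this to vanish unless $E\cong E\otimes\omega_X$. Let $\Sigma=\{[E]\in M(H):E\cong E\otimes\omega_X\}$, a closed subset off which $M(H)$ is smooth. A standard descent argument shows that a point of $\Sigma$ is a pushforward $E\cong\pi_*F$ for a $\pi^*H$-stable sheaf $F$ on $\tilde{X}$ with $g^*F\not\cong F$ for every $g\ne e$ in $\Gamma:=\mathrm{Gal}(\pi)$ --- the last condition being necessary for $\pi_*F$ to be $H$-stable, and equivalent to $\Gamma$ acting freely at $[F]$ --- whereupon the projection formula (using $\pi$ \'etale and $\pi^*\omega_X=\Otx$) gives $\Ext^2(E,E)\cong\Ext^2_{\tilde{X}}(F,F)\cong\CC$. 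Thus $M(H)$ is everywhere smooth or, along $\Sigma$, locally a hypersurface in a smooth scheme; in particular it is l.c.i., hence Cohen--Macaulay and Gorenstein with invertible dualizing sheaf, i.e.\ $1$-Gorenstein. Moreover $\Sigma$ is the image of an open subset of the (smooth) moduli spaces $M_{\tilde{X}}$ under the free action of $\Gamma$, so $\Sigma$ is smooth, and $\Sing M(H)\subseteq\Sigma$, with equality once the local model below is in hand.

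\emph{The local model along $\Sigma$, after Mukai.} As $\pi$ is \'etale, $\Ext^i(E,E)=\bigoplus_{g\in\Gamma}\Ext^i_{\tilde{X}}(g^*F,F)$; set $A=\Ext^1_{\tilde{X}}(F,F)$ (the summand $g=e$) and $B=\bigoplus_{g\ne e}\Ext^1_{\tilde{X}}(g^*F,F)$, so $\Ext^1(E,E)=A\oplus B$. Deformations of $F$ on $\tilde{X}$ are unobstructed (since $\tilde{X}$ is K3 or abelian and $F$ is stable, $\Ext^2_{\tilde{X}}(F,F)^{\circ}=0$) and push forward to deformations of $E=\pi_*F$ sweeping out $\Sigma$; so after a formal change of coordinates $\kappa$ vanishes identically on $A$. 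The Hessian of $\kappa$ on $B$ is the Yoneda pairing $\bigoplus_{g\ne e}\Ext^1_{\tilde{X}}(g^*F,F)\otimes\Ext^1_{\tilde{X}}((g^{-1})^*F,F)\to\Ext^2_{\tilde{X}}(F,F)\cong\CC$, which is nondegenerate: via the isomorphism $g^*$ it is the perfect Serre-duality pairing on $\tilde{X}$ (a nondegenerate quadratic form on $\Ext^1_{\tilde{X}}(g^*F,F)$ when $g^2=e$, a hyperbolic pairing of the $g$- and $g^{-1}$-summands otherwise). By the splitting lemma (Morse lemma with parameters), the formal neighbourhood of $[E]$ in $M(H)$ is isomorphic to the product of the quadric cone $\{q=0\}\subset B$ with a smooth factor of dimension $\dim A$.

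\emph{The r\^ole of $2rc_2-(r-1)c_1^2-r^2\chi(\Ox)\ge3d$, and conclusion.} Put $v^2=2rc_2-(r-1)c_1^2-r^2\chi(\Ox)$, so $\dim A+\dim B=\ext^1(E,E)=v^2+2$ and $\dim A=\dim M_{\tilde{X}}(F)$. The key computation, which I expect to come from Riemann--Roch on $\tilde{X}$, is that $v^2\ge3d$ forces $\dim M_{\tilde{X}}(F)\le v^2-1$, i.e.\ $\dim B\ge3$, at every point of $\Sigma$ --- equivalently, that each component of $\Sigma$ has codimension $\ge2$ in $M(H)$, the constant $3d$ being sharp (attained on a component coming from a $\Gamma$-invariant Mukai vector on a degree-$2$ cover). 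Granting this, $\operatorname{codim}_{M(H)}\Sigma=\dim B-1\ge2$, so $M(H)$ is regular in codimension $1$ and, being Cohen--Macaulay, normal; and $\Sigma=\Sing M(H)$. Finally, since $\Sigma$ is smooth and $M(H)$ is, along $\Sigma$, the total space of a quadric-cone bundle with everywhere nondegenerate fibre form, blowing up $M(H)$ along $\Sigma$ yields a resolution $f\colon\widetilde{M}\to M(H)$, an isomorphism over $M(H)\setminus\Sigma$, with a single smooth exceptional divisor $\mathcal E$ and $K_{\widetilde{M}}=f^*K_{M(H)}+(\dim B-3)\,\mathcal E$ --- the discrepancy being $\ge0$ because $\dim B\ge3$ (the adjunction computation for the blow-up of the vertex of a quadric cone of dimension $n-1$ in $\mathbb{A}^n$). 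Hence $M(H)$ has only canonical singularities, transversally Du Val of type $A_1$ where $\dim B=3$ and terminal where $\dim B\ge4$. The main obstacle is the Riemann--Roch estimate above: it must give the sharp constant $3d$ uniformly over all relevant Mukai vectors on $\tilde{X}$ ($\Gamma$-invariant or not) and at \emph{every} point of $\Sigma$, not merely generically, since a single point with $\dim B=2$ (two transverse hyperplanes) would already destroy normality; establishing $\Ext^2(\pi_*F,\pi_*F)^{\circ}\cong\CC$ and the nondegeneracy of the Yoneda form on $B$ when $\Gamma$ has elements of order $>2$ are the remaining technical points, but I expect them to be routine.
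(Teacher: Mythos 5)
Your skeleton is the same as the paper's: a one-equation Kuranishi model at the points where $\ext^2(E,E)^{\circ}=1$, pullback along the \'etale cover $\pi$ to decompose $\Ext^1(E,E)$ into a ``tangent to $\Sigma$'' part $A$ and a normal part $B$, a lower bound on the rank of the quadratic part of the defining equation, and a codimension bound on $\Sigma$. But the step you dispose of in one clause --- nondegeneracy of the Hessian on $B$ --- is the technical heart of the matter, and as stated your justification is not a proof. The quadratic part of the Kuranishi map is the \emph{symmetrized} form $(\alpha,\beta)\mapsto\tr(f\circ\alpha\circ\beta+f\circ\beta\circ\alpha)$, where $f$ spans $\Hom(E,E\otimes\omega_X)$. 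The unsymmetrized pairing $\tr(f\circ\alpha\circ\beta)$ is indeed perfect by Serre duality, but the symmetrization of a perfect pairing can be degenerate or even identically zero: graded commutativity of the trace gives $\tr(f\circ\beta\circ\alpha)=-\tr(\alpha\circ f\circ\beta)$, so the two terms threaten to cancel. In the Enriques case $d=2$, where $g=g^{-1}$ and the form pairs $\Ext^1_{\tilde X}(g^*F,F)$ with itself, this is exactly the danger. The paper's resolution is to diagonalize $\pi^*f$ on $\pi^*E=\oplus_g F_g$ with eigenvalues $g(a_1)\in H^0(\Otx)$, prove these eigenvalues are pairwise distinct using $H^0(mK_X)=0$ for $0<m<d$, and then compute that on the relevant classes the symmetrized form equals $(1-a_1/h(a_1))$ times the perfect one --- nonzero precisely because $h(a_1)\neq a_1$. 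Without some version of this eigenvalue bookkeeping, your local model (nondegenerate quadric cone in the $B$-directions times a smooth factor) is unsubstantiated. Note the paper never proves full nondegeneracy on $B$; it only extracts three independent vectors on which the form is a sum of squares, which together with the codimension bound already suffices via the classification of canonical hypersurface singularities $t_0^{a_0}+\dots+t_n^{a_n}$.

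Two smaller points. The estimate you defer as ``the main obstacle'' does work and is where the constant $3d$ enters: Riemann--Roch plus the Hodge index theorem give $\chi(F,g^*F)\leq\chi(F,F)$, hence $\chi(E,E)=\sum_g\chi(F,g^*F)\leq d\,\chi(F,F)$, so $\dim A=\ext^1_{\tilde X}(F,F)\leq 2-\chi(E,E)/d$ and $\dim B\geq(-\chi(E,E))(d-1)/d\geq 3(d-1)\geq 3$; the paper runs this at the level of a single summand to find $h\neq e$ with $\ext^1(F,h^*F)\geq-\chi(E,E)/d\geq 3$. Finally, your splitting-lemma step requires not just that $\kappa$ vanish on $\Sigma$ but that $\kappa\in I_{\Sigma}^2$ with fibrewise nondegenerate Hessian along all of $\Sigma$ near the point; otherwise a cubic residue on the $A$-factor survives and your explicit blow-up resolution and discrepancy count $\dim B-3$ no longer apply verbatim. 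The paper sidesteps this by deforming the full equation to its quadratic part and invoking stability of rational (equivalently, for l.c.i., canonical) singularities under small deformation, which is the more robust route when only a lower bound on the rank is available.
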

\begin{cor}\label{cor:main}
If the hypothesis in the theorem above holds and, in addition,
the greatest common divisor of
$r$, $c_1\cdot {\mathcal O}_X(1)$ and $c_1^2/2-c_2$ equals $1$, then
$M(H)$ is known to be projective by \cite[Thm. 6.11]{Ma:moduli2}, 
and its Kodaira dimension is zero.
\end{cor}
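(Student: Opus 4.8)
The plan is to feed the conclusion of Theorem~\ref{thm:main} into a global description of the canonical sheaf of $M:=M(H)$, exploiting that on an Enriques or hyper-elliptic surface $K_X$ is not merely numerically trivial but a torsion class in $\Pic(X)$. The hypothesis $\gcd(r,\,c_1\cdot\Ox(1),\,c_1^2/2-c_2)=1$ rules out strictly $H$-semistable sheaves with Chern classes $(r,c_1,c_2)$, so $M=\bar{M}(H)$ is projective over $\CC$ by \cite[Thm.~6.11]{Ma:moduli2}, and the same primitivity provides a universal sheaf $\mathcal E$ on $X\times M$. By Theorem~\ref{thm:main}, $M$ is normal, projective and $1$-Gorenstein with at worst canonical singularities, so $\mathcal O_M(mK_M)$ is an honest line bundle for every $m$ and $H^0(M,mK_M)=H^0(\widetilde M,mK_{\widetilde M})$ for a resolution $f\colon\widetilde M\to M$. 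Hence $\kappa(M)=0$ will follow once we show that $K_M$ is a torsion element of $\Pic(M)$: then $h^0(M,mK_M)\in\{0,1\}$ for all $m$, with value $1$ whenever $m$ is a multiple of the order of $K_M$.

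To see that $K_M$ is torsion, set $U:=\{[E]\in M:E\not\cong E\otimes K_X\}$. This is open, and since for stable $E$ one has $\Ext^2(E,E)\cong\Hom(E,E\otimes K_X)^{\vee}$, which vanishes precisely on $U$, the locus $U$ is smooth of the expected dimension, with $\mathcal{E}xt^{0}_{\pi}(\mathcal E,\mathcal E)=\mathcal O_U$, $\mathcal{E}xt^{1}_{\pi}(\mathcal E,\mathcal E)=T_U$ and $\mathcal{E}xt^{2}_{\pi}(\mathcal E,\mathcal E)=0$ for $\pi\colon X\times M\to M$; consequently $\mathcal O_U(K_U)\cong\det\bigl(R\pi_*R\mathcal{H}om(\mathcal E,\mathcal E)\bigr)\big|_U$. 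The Grothendieck--Riemann--Roch / Knudsen--Mumford formula for this determinant line bundle — the computation also entering the proof of Theorem~\ref{thm:main} — expresses it through $\pi_*\bigl(\ch(\mathcal E)^{\vee}\ch(\mathcal E)\operatorname{td}(X)\bigr)$; since $\ch(\mathcal E)^{\vee}\ch(\mathcal E)$ has only even-degree components, every surviving term is a multiple of $c_1(X)=-K_X$, so, $K_X$ being torsion in $\Pic(X)$, a fixed power of $K_M|_U$ is trivial. Because $M\setminus U=\{[E]:E\cong E\otimes K_X\}$ has codimension $\ge2$ in $M$ — it is empty unless $K_X^{\otimes r}\cong\Ox$ (e.g.\ unless $2\mid r$ in the Enriques case), and otherwise its dimension is bounded as in the proof of Theorem~\ref{thm:main} — the restriction $\Pic(M)\to\Pic(U)$ is injective, so the same power of $K_M$ is trivial. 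Thus $K_M$ is torsion and $\kappa(M)=0$.

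The step demanding genuine care is twofold: one must check that $\{[E]:E\cong E\otimes K_X\}$ is at least $2$-codimensional, so that the torsion statement on $U$ transfers to $M$, and — more delicately — that the determinant-bundle formula yields an actual \emph{torsion} line bundle rather than a merely numerically trivial one, which requires the integral (Knudsen--Mumford) form of the computation together with the torsion-ness of $K_X$, not just $K_X\equiv0$. Both points run parallel to the dimension estimates already made in proving Theorem~\ref{thm:main}, where the inequality $2rc_2-(r-1)c_1^2-r^2\chi(\Ox)\ge 3d$ is used; granting those estimates, the deduction of the corollary is short.
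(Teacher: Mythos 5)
Your proposal is correct and follows essentially the same route as the paper, whose entire argument for the corollary is the single sentence that projectivity follows from the gcd condition via Maruyama and that ``some positive multiple of the canonical class of $M(H)$ equals ${\mathcal O}_{M(H)}$ by Grothendieck--Riemann--Roch,'' which together with the canonical singularities of Theorem~\ref{thm:main} gives $\kappa=0$. You merely supply the details the paper compresses (the determinant-bundle computation on the smooth locus, the codimension-$2$ extension using Lemma~\ref{lem:dimSing}(c), and the torsion-versus-numerically-trivial distinction), all consistent with the paper's intent.
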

See Definition \ref{defn:terms} for definition of terms, {\it canonical singularities}
and {\it Kodaira dimension}.
The new point of this article is that we consider stable sheaves on Enriques
surfaces, and observe how good singularities of its moduli schemes are.
When $X$ is K3, Abelian, or Fano, $M(H)$ is non-singular and its Kodaira
dimension has been studied
(\cite[Prop. 6.9.]{Ma:moduli2}, \cite{Mk:symplectic}, \cite{Yos:abelian}).
However, $M(H)$ has singularities in general; how good they are?
Thus far, the works on formality are known on this problem.
Goldman-Millson \cite{Goldman-M:deform} discussed deformation of flat bundles;
remark that its first and second Chern classes are zero.
The author does not know whether such a result is valid also for general stable
(or Hermite-Einstein) bundles or not.
Formality discusses the effect of degree-two part of defining ideal on
the moduli space; we would rather estimate the degree-two part itself.
%
As to moduli of stable sheaves on an Enriques surface,
Kim dealt it in \cite{Kim:Enriques},
and obtained a finite birational map from $M(H)$ to a Lagrangian subvariety
of the moduli space of sheaves on K3 surfaces.
The singularities of $M(H)$ were not considered.\par
%

%
\begin{acknowledgment}
The author would like to express sincere gratitude to 
Prof. M. Inaba, Prof. D. Matsushita, Prof. S. Mukai, and
Prof. Y. Namikawa for their invaluable suggestions and comments.
Deep appreciation also goes to Prof. O. Fujino and Prof. S. Mori
for explaining the importance of singularities to the author.
\end{acknowledgment}
\begin{notation}
All schemes are of finite type over $\CC$, but it seems that the greater
part of this article holds also for algebraically closed field $k$
with $(r, \operatorname{char}(k))=1$.
For sheaves $E$ and $F$ on a projective scheme over $\CC$,
$\hom(E,F)$ and $\ext^i(E,F)$ mean $\dim\Hom(E,F)$ and $\dim\Ext^i(E,F)$
respectively.
For homomorphisms $f:F\rightarrow G$ and $g:G'\rightarrow E$,
$f^*$ and $f_*$ mean natural pull-back and push-forward homomorphisms
of Exts, respectively.
For a line bundle $L$, $\Ext^i(E,E\otimes L)^{\circ}$ denotes 
the kernel of trace map $\Ext^i(E,E\otimes L)\rightarrow H^i(L)$.
\end{notation}

\section{Proof of Theorem}

Let $X$ be a non-singular complex projective surface with arbitrary Kodaira dimension.
Let us begin with recalling the definition of some terms.
\begin{defn}
For a nonzero torsion-free sheaf $F$, we denote $\chi(F(nH))/r(F)$ by $P(F(n))$.
A coherent sheaf $E$ on $X$ is {\it stable} if $E$ is torsion free
and for every proper coherent subsheaf $F$ of $E$ we have that
\[  P(F(n)) < P(E(n)) \qquad (n\gg 0).\]
\end{defn}

\begin{defn}\label{defn:terms}
(1) Given any variety $V_0$, define its {\it Kodaira dimension} $\kappa(V_0)$ to
be $\max \{ \dim \Phi_{mK_{\tilde{V}}}\bigm| m\in \NN \}$, where $\tilde{V}$ is 
a desingularization of a completion of $V_0$. Kodaira dimension is 
birational invariant. \\
(2) A normal variety $V$ is said to admit only {\it canonical singularities} when
(i) $K_V$ is $\QQ$-Cartier, and (ii) if $\phi: \tilde{V} \rightarrow V$ is a
desingularization with except divisors $E_i$, then
\[ K_{\tilde{V}} = \phi^* K_V + \textstyle \sum_i a_i E_i \qquad (a_i \geq 0). \]
When $V$ does so and $V$ is complete, $\kappa(V)$ equals 
$ \max \{ \dim \Phi_{mK_{\tilde{V}}}\bigm| m\in \NN \}$, so
we need not consider its desingularization $\tilde{V}$ in calculating $\kappa(V)$.
\end{defn}
%
%
\begin{fact}[\cite{Ishii:weighted}, Cor. 1.7, \cite{WatanabeH:ellipt-sing},\cite{Reid:young}]\label{fact:canonical}
Let $(X,x)\subset(\CC^{n+1},0)$ be a hypersurface singularity defined by 
$t_0^{a_0}+t_1^{a_1}+\dots +t_n^{a_n}$.
This singularity is canonical if and only if $\sum^n_{i=0}\frac{1}{a_i}>1$.
\end{fact}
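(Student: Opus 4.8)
The plan is to read the answer off a single, canonically chosen divisorial valuation and then to pin down the hard inequality by trading canonicity for rationality. First I would normalize the data: reorder so that each $a_i\geq 2$ (a variable with $a_i=1$ can be eliminated, leaving a smooth germ for which both sides hold), set $L=\operatorname{lcm}(a_0,\dots,a_n)$ and $b_i=L/a_i\in\ZZ_{>0}$, so that $f=\sum_i t_i^{a_i}$ is quasi-homogeneous of weighted degree $L$ for the grading $\deg t_i=b_i$. Since the partials $a_it_i^{a_i-1}$ vanish simultaneously only at $0$, the singularity is isolated; for $n\geq 2$ this makes $R:=\CC[t_0,\dots,t_n]/(f)$ normal (Serre's $R_1+S_2$) and, being a hypersurface, Gorenstein. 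The numerical quantity controlling everything is the $a$-invariant of the graded ring,
\[ a(R)=\deg f-\textstyle\sum_i\deg t_i=L-\sum_i b_i=L\Big(1-\sum_i\tfrac{1}{a_i}\Big), \]
an integer whose sign is opposite to that of $\sum_i 1/a_i-1$.

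For the ``only if'' direction I would exhibit one divisor of negative discrepancy. Let $\sigma\colon W\to\CC^{n+1}$ be the weighted blow-up of the origin with weights $(b_0,\dots,b_n)$ and exceptional divisor $E_W\cong\PP(b_0,\dots,b_n)$; the standard formula gives $K_W=\sigma^*K_{\CC^{n+1}}+(\sum_i b_i-1)E_W$. The strict transform $\widetilde X$ of $X$ meets $E_W$ in the weighted projective hypersurface $\{f=0\}$, and the total transform satisfies $\sigma^*X=\widetilde X+L\,E_W$ since $\operatorname{wt}_b(f)=L$. Restricting to $\widetilde X$ and applying adjunction yields, for $E:=E_W|_{\widetilde X}$,
\[ K_{\widetilde X}=\tau^*K_X+\big(\textstyle\sum_i b_i-L-1\big)E,\qquad\text{so}\qquad a(E;X)=\sum_i b_i-L-1=-a(R)-1. \]
Hence if $\sum_i 1/a_i\leq 1$ then $a(R)\geq 0$ and $a(E;X)\leq -1<0$, so $X$ is not canonical (the coefficient is uniform on every component of $E$, so reducibility of $\{f=0\}$ is harmless).

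For the ``if'' direction, assume $\sum_i 1/a_i>1$, i.e. $a(R)<0$, and I would argue that \emph{every} exceptional divisor has nonnegative discrepancy. The clean route is to replace ``canonical'' by ``rational'': since $X$ is Gorenstein, it is canonical if and only if it has rational singularities (Elkik--Flenner for canonical $\Rightarrow$ rational, and the Gorenstein converse). For a normal graded isolated singularity this is governed precisely by the $a$-invariant --- Watanabe's criterion states that $R$ has rational singularities if and only if $a(R)<0$ --- which is exactly the structural input carried by the cited works of Watanabe, Ishii and Reid. Feeding in $a(R)=L(1-\sum_i 1/a_i)<0$ gives rationality, hence canonicity.

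I expect the sufficiency to be the real obstacle. The explicit computation only shows that the one natural valuation $E$ has nonnegative discrepancy; ruling out every other, possibly non-monomial, divisor of negative discrepancy is the whole substance of the graded rational-singularities criterion. One could try to make this self-contained by minimizing, over all monomial valuations $v_w$, the discrepancy $\sum_i w_i-\min_i(a_iw_i)-1$, whose homogeneous part $\sum_i w_i-\min_i(a_iw_i)\geq(\sum_i 1/a_i-1)\min_i(a_iw_i)$ is extremized in the canonical direction $w\propto(b_0,\dots,b_n)$; but this crude bound does not by itself force the value $1$, and converting it into a statement about all divisors while controlling the cyclic quotient singularities introduced on $\widetilde X$ is exactly where the cited theorems do the essential work.
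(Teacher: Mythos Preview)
The paper does not prove this statement; it is recorded as a \emph{Fact} with citations to Ishii, Watanabe--Higuchi, and Reid, and is used as a black box in Lemma~\ref{lem:enough-to-be}. There is therefore no in-paper proof to compare against.

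Your argument is correct and is essentially the standard one found in those references. The necessity direction via the weighted blow-up with weights $(b_0,\dots,b_n)=(L/a_0,\dots,L/a_n)$ and the discrepancy computation $a(E;X)=\sum_i b_i-L-1$ is exactly Reid's approach; the sufficiency via the identification of canonical with rational for Gorenstein singularities, together with Watanabe's criterion $a(R)<0$ for rationality of a graded isolated singularity, is precisely what the Watanabe--Higuchi and Ishii citations supply. Your candid acknowledgment that the sufficiency direction genuinely rests on that graded-ring criterion, and that the naive monomial-valuation bound does not by itself close the argument, is accurate. One minor remark: you implicitly work with $n\geq 2$ when invoking normality via $R_1+S_2$; this is harmless here since the paper only applies the fact to hypersurfaces of dimension at least $3$, but strictly speaking the low-dimensional cases ($n\leq 1$) would need separate bookkeeping.
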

As to singularities of $M(H)$, we shall use the following fact in Kuranishi theory:
\begin{fact}\cite{Lau:Massey}\label{fact:moduli-lci}
Let $E$ be a stable sheaf on arbitrary non-singular surface.
Denote $\dim\Ext^1(E,E)=d+b$ and $\dim\Ext^2(E,E)^{\circ}=b$, and
let $f_1, \dots, f_b$ be a basis of $\Hom(E,E(K_X))^{\circ}$.
Then the local ring $R$ of $M(H)$ at $E$ is isomorphic to
$\CC[[t_1,\dots, t_{d+b}]]/(F_1,\dots, F_b)$, where
$F_i$ is a power series starting with degree-two term, which comes from
\begin{equation}\label{eq:Ff}
 F_f: \Ext^1(E,E) \otimes \Ext^1(E,E) \longrightarrow \Ext^2(E,E) 
\longrightarrow \CC  
\end{equation}
defined by $F_f(\alpha\otimes \beta)=\operatorname{tr}(f\circ\alpha\circ\beta
+ f\circ\beta\circ\alpha)$, and its dual map
\begin{equation}\label{eq:Ffvee}
 F^{\vee}_f: \CC \longrightarrow \Ext^1(E,E)^{\vee}\otimes 
\Ext^1(E,E)^{\vee} \longrightarrow \operatorname{Sym}^2(\Ext^1(E,E)^{\vee}),
\end{equation}
where $f$ equals $f_i$.
\end{fact}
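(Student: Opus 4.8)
The plan is to identify the completed local ring $R=\widehat{\mathcal O}_{M(H),[E]}$ with the base of a formal miniversal deformation of $E$, and then read off its presentation from obstruction theory. Since $E$ is stable it is simple, $\Hom(E,E)=\CC\cdot\id$, so its only infinitesimal automorphisms are scalars, and these act trivially on deformations. Hence the deformation functor $\mathrm{Def}_E$ on Artinian local $\CC$-algebras admits a hull, and at the stable point $[E]$ the coarse moduli scheme is formally isomorphic to the base of a miniversal deformation (the Kuranishi space); no further quotient intervenes. This gives an isomorphism of complete local rings between $R$ and the pro-representing hull, reducing the statement to a computation in the deformation theory of $E$.

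Next I would pin down the tangent and obstruction spaces. Standard deformation theory of coherent sheaves gives $\mathrm{Def}_E(\CC[\epsilon])=\Ext^1(E,E)$, so the embedding dimension of $R$ is $\dim\Ext^1(E,E)=d+b$ and $R$ is a quotient of $\CC[[t_1,\dots,t_{d+b}]]$, with $d$ the virtual dimension. Obstructions a priori lie in $\Ext^2(E,E)$, but they are automatically trace-free: at each order the trace of an obstruction class equals the corresponding obstruction to deforming $\det E$, and the latter vanishes because $\Pic X$ is smooth in characteristic zero. Thus every obstruction lies in the $b$-dimensional space $\Ext^2(E,E)^{\circ}$. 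Serre duality identifies $\Ext^2(E,E)^{\circ}$ with the dual of $\Hom(E,E(K_X))^{\circ}$, so the chosen basis $f_1,\dots,f_b$ furnishes exactly $b$ linear functionals in which to record obstructions.

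I would then construct the formal miniversal deformation order by order, following Laudal's matric Massey product formalism of \cite{Lau:Massey}. Starting from the tautological first-order deformation $\alpha_1=\sum_j t_j e_j$, where $\{e_j\}$ is a basis of $\Ext^1(E,E)$ dual to the coordinates $t_j$, one attempts at each order $n$ to lift the $(n-1)$-st order solution; the obstruction is a matric Massey product of the lower-order data, valued in $\Ext^2(E,E)^{\circ}$. Pairing these obstructions against $f_1,\dots,f_b$ assembles the homogeneous degree-$n$ parts of $b$ power series $F_1,\dots,F_b$, and Laudal's theorem identifies the hull with $\CC[[t_1,\dots,t_{d+b}]]/(F_1,\dots,F_b)$.

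Finally I would compute the leading term. The lowest obstruction is the Yoneda square $o_2=\tfrac12[\alpha_1,\alpha_1]$ for the graded Lie bracket on $\Ext^{\bullet}(E,E)$; since $\Ext^1$ sits in odd degree this bracket is $[\alpha,\beta]=\alpha\circ\beta+\beta\circ\alpha$. Pairing $o_2$ with $f_i$ through the trace yields the degree-two part of $F_i$ as the quadratic form $\alpha\mapsto\tr(f_i\circ\alpha\circ\alpha)$, whose polarization is, up to the standard factor of two, the symmetric bilinear form $F_{f_i}$ of \eqref{eq:Ff}; equivalently it is the element $F^{\vee}_{f_i}\in\Sym^2(\Ext^1(E,E)^{\vee})$ of \eqref{eq:Ffvee}. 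The main obstacle is controlling the higher Massey products rather than this degree-two term: the cochain-level lifts used to define them are non-canonical, so one must verify that different choices alter the ideal $(F_1,\dots,F_b)$ only by a formal change of coordinates and unit factors, leaving $R$ unchanged, and must confirm that trace-freeness threads through the entire tower so that exactly $b$ equations arise at every order.
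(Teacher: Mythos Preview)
The paper does not prove this statement: it is stated as a \emph{Fact} with a bare citation to \cite{Lau:Massey}, and the subsequent arguments only use its conclusion. So there is no ``paper's own proof'' to compare against.

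That said, your sketch is a correct outline of the standard argument behind the cited result. The identification of $\widehat{\mathcal O}_{M(H),[E]}$ with the hull of $\mathrm{Def}_E$ at a stable (hence simple) point, the tangent space $\Ext^1(E,E)$, the trace-freeness of obstructions via smoothness of $\Pic X$, the Serre-dual pairing with $\Hom(E,E(K_X))^{\circ}$, and the identification of the quadratic term with the symmetrized Yoneda product are all right. Your acknowledgment that the delicate content lies in the higher-order Massey tower (well-definedness of the ideal up to formal coordinate change, and persistence of trace-freeness through the tower) is exactly where the work in \cite{Lau:Massey} sits; for the purposes of this paper only the degree-two initial form is ever used, so your level of detail is already more than what the paper requires.
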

From now on, we shall suppose that $K_X$ is numerically equivalent to zero, and
let $E$ be a stable sheaf on $X$.
If $M(H)$ is singular at $E$, then $\ext^2(E,E)=\hom(E,E(K_X))=1$ since
stability of $E$ implies that $\hom(E,E(K_X))\leq 1$ in this case,
and so $\Hom(E,E(K_X))$ is spanned by one element, say $f$.
\begin{lem}\label{lem:enough-to-be}
Let us choose coordinates $t_i$ at Fact \ref{fact:moduli-lci} so that
$F_f^{\vee}=t_1^2+\dots +t_N^2$ with some $0\leq N\leq d+b$.
If the codimension of
\[ \operatorname{Sing}(M(H)):=\{ [E] \bigm| \ext^2(E,E)^{\circ}\neq 0 \} \]
in $M(H)$ is more than one and $N\geq 3$, then the singularity of $M(H)$ at $E$
is canonical.
\end{lem}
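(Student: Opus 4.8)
The plan is to replace $M(H)$ near $E$ by the explicit local model furnished by Fact~\ref{fact:moduli-lci}, to reduce that germ to a quadric cone by a sequence of general hyperplane sections, and then to quote Fact~\ref{fact:canonical}.

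Since $M(H)$ is singular at $E$ and $K_X$ is numerically trivial, $\ext^2(E,E)^{\circ}=1$, so $b=1$ in Fact~\ref{fact:moduli-lci} and the completed local ring of $M(H)$ at $E$ is
\[
 R=\CC[[t_1,\dots,t_{d+1}]]/(F),\qquad F=q+(\text{terms of degree }\ge 3),
\]
where $q=F_f^{\vee}=t_1^2+\dots+t_N^2$ is the quadratic part of $F$. Thus $(M(H),E)$ is a hypersurface germ, in particular $1$-Gorenstein (so $K$ is Cartier and the phrase ``canonical singularity'' is meaningful) and Cohen--Macaulay; moreover $F$ is irreducible because $\rk q\ge 3$, so $\Spec R$ is a variety. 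For normality, since $\partial_iF\equiv\partial_iq$ modulo the square of the maximal ideal, the Zariski tangent space at the origin of $\Sing(\Spec R)$ lies in the kernel of the bilinear form attached to $q$, which has dimension $d+1-N$; hence $\Sing(\Spec R)$ has codimension $\ge N-1\ge 2$, and $R_1+S_2$ yields normality. (The global hypothesis that $\Sing(M(H))$ has codimension $>1$ plays the same role for $M(H)$ itself.)

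Now I would prove, by induction on $m=(n+1)-\rk(q_F)$, that \emph{every normal hypersurface germ $(Y,0)=\{F=0\}\subset(\CC^{n+1},0)$ whose quadratic part $q_F$ has rank $\ge 3$ has canonical singularities.} If $m=0$, then $q_F$ is nondegenerate, so the formal Morse lemma gives $F\sim s_0^2+\dots+s_n^2$; by Fact~\ref{fact:canonical} (all exponents equal to $2$) this germ is canonical since $\sum_{i=0}^{n}\tfrac12=\tfrac{n+1}2>1$. If $m>0$, recall that a Gorenstein normal singularity is canonical if and only if it is rational \cite{Reid:young}, and that a general hyperplane section of a rational singularity is rational, with a converse for Cohen--Macaulay singularities; hence $(Y,0)$ is canonical if and only if its general hyperplane section is. But a general hyperplane section of $Y$ is again a normal hypersurface germ (Bertini), of ambient dimension one smaller, whose quadratic part is $q_F$ restricted to a general hyperplane; since $\rk(q_F)<n+1$ here, that hyperplane meets the kernel of $q_F$ properly and no rank is lost, so the section again has quadratic part of rank $\rk(q_F)\ge 3$, while $m$ has dropped by one. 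The induction closes, and applying it with $(Y,0)$ equal to $\Spec R=(M(H),E)$ proves the Lemma.

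The crux is the hyperplane-section step: one must check that a \emph{general} section preserves the rank of the quadratic part --- so that the induction terminates at the diagonal germ covered by Fact~\ref{fact:canonical} --- and that every intermediate germ stays normal and Gorenstein, so that the chain of equivalences ``canonical $\Leftrightarrow$ rational $\Leftrightarrow$ general section rational'' is available at each stage. This is precisely where $N\ge 3$ is used: with $N=2$ the base case would give $\sum\tfrac1{a_i}=1$, violating the criterion of Fact~\ref{fact:canonical}, and in fact for $N\le 2$ the germ need not even be normal (e.g. $t_1^2+t_2^2$). One could instead resolve $\Spec R$ directly, blowing up the locus $\{t_1=\dots=t_N=0\}$ repeatedly --- its codimension $N$ against a multiplicity-two hypersurface forces each new discrepancy to be $\ge N-2\ge 1$ --- but the bookkeeping of the strict transforms is more laborious.
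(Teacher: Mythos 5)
Your argument is correct and reaches the same conclusion, but the key reduction step is different from the paper's. The paper, after establishing (as you do) that the germ is an l.c.i.\ hypersurface, normal and 1-Gorenstein, and that canonical $\Leftrightarrow$ rational can be tested on the completion, invokes the stability of rational singularities under small deformation \emph{once}, applied to the degeneration of $F$ to its tangent cone (the family $F_s(t)=s^{-2}F(st)$ specializes $F$ to $q=t_1^2+\dots+t_N^2$ at $s=0$, while all fibers with $s\neq 0$ are isomorphic to the original germ); Fact \ref{fact:canonical} then applies directly to $q$ with $N\geq 3$. You instead run an induction on the corank of $q$ via general hyperplane sections, using inversion of adjunction for rational singularities at each stage and the formal Morse lemma at the base case. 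Both routes ultimately rest on the same theorem of Elkik (rationality is stable in flat families whose special fiber is rational), but the paper's single degeneration avoids the bookkeeping you yourself flag as the crux --- preservation of the rank of the quadratic part, and normality and Gorenstein-ness of every intermediate section --- at the modest cost of having to know that the possibly degenerate quadric cone $\{t_1^2+\dots+t_N^2=0\}\subset\CC^{d+1}$ is canonical, which is exactly what Fact \ref{fact:canonical} (read as a Brieskorn singularity, the extra variables contributing nothing) supplies. Your observation that $N\geq 3$ already forces $\operatorname{codim}\Sing\geq 2$ in the local model is a nice supplement: the paper simply imposes the codimension condition as a hypothesis and verifies it separately in Lemma \ref{lem:dimSing}.
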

\begin{proof}
The assumption implies that $F_f^{\vee}\neq 0$, and thus $M(H)$ is l.c.i.
by Fact \ref{fact:moduli-lci}.
Since $M(H)$ is smooth in codimension one, $M(H)$ is normal and 1-Gorenstein.
In order to determine whether the singularity of l.c.i. ring $R$ at $x\in \Spec(R)$ 
is canonical or not, it is sufficient to look at its completion at $x$.
This is because the singularities of $R$ are canonical if and only if
they are rational singularities(\cite[Cor. 5.2.15]{Ishii:text}),
and a variety $V$ has only rational singularities if and only if
$V^{\operatorname{an}}$ does so (\cite[Cor. 5.11]{KM:birat}).
Since rationality of singularities are stable under small deformation
(\cite[Sect. 8]{Ishii:text}), this lemma follows Fact \ref{fact:canonical}.
\end{proof}
As to the covering $\pi: \tilde{X}\rightarrow X$ of $X$,
we have a free-action of a finite group $G$ with $|G|=d$ on $\tilde{X}$ such that
$\pi$ equals to the quotient map by $G$.
It holds that $\pi_*({\mathcal O}_{\tilde{X}})=\oplus^{d-1}_{i=0}K_X^{-i}$.
%
\begin{lem}\label{lem:plbk-stable}
If $E$ is a $H$-stable sheaf on $X$, 
then $\pi^*(E)$ is $\pi^* H$-semistable.
\end{lem}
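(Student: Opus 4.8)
The plan is to reduce the semistability of $\pi^*(E)$ to the stability of $E$ by using the fact that $\pi$ is a finite étale Galois cover of degree $d$ with group $G$. First I would fix a torsion-free quotient $\pi^*(E)\twoheadrightarrow \mathcal Q$ on $\tilde X$ and push it forward: since $\pi$ is finite and flat, $\pi_*$ is exact and preserves torsion-freeness, so $\pi_*\pi^*(E)\twoheadrightarrow \pi_*\mathcal Q$ is a quotient on $X$. Using the projection formula together with $\pi_*(\mathcal O_{\tilde X})=\bigoplus_{i=0}^{d-1}K_X^{-i}$, we get $\pi_*\pi^*(E)=E\otimes\bigl(\bigoplus_{i=0}^{d-1}K_X^{-i}\bigr)$. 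Because $K_X$ is numerically trivial, each summand $E\otimes K_X^{-i}$ is again $H$-stable with the same reduced Hilbert polynomial $P(E(n))$ as $E$; hence $\pi_*\pi^*(E)$ is a direct sum of $d$ stable sheaves all having the same reduced Hilbert polynomial, i.e. it is $H$-semistable with reduced Hilbert polynomial $P(E(n))$.

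The next step is to compare Hilbert polynomials on $X$ and on $\tilde X$ with respect to $H$ and $\pi^*H$. Since $\pi$ is étale of degree $d$, the projection formula gives $\chi(\tilde X,\mathcal F(n\pi^*H))=\chi(X,\pi_*\mathcal F(nH))$ for any coherent $\mathcal F$ on $\tilde X$, and $\mathrm{rk}(\pi_*\mathcal F)=d\cdot\mathrm{rk}(\mathcal F)$; thus the reduced Hilbert polynomial of $\mathcal F$ on $(\tilde X,\pi^*H)$ equals that of $\pi_*\mathcal F$ on $(X,H)$. Applying this to $\mathcal F=\pi^*E$ and to $\mathcal F=\mathcal Q$, the $\pi^*H$-reduced Hilbert polynomial of $\pi^*E$ is $P(E(n))$, and that of the quotient $\mathcal Q$ equals the $H$-reduced Hilbert polynomial of $\pi_*\mathcal Q$. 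Since $\pi_*\pi^*(E)$ is $H$-semistable with reduced Hilbert polynomial $P(E(n))$ and $\pi_*\mathcal Q$ is one of its quotients, we get $P(\pi_*\mathcal Q(n))\ge P(E(n))$ for $n\gg 0$, hence $P(\mathcal Q(n\pi^*H))\ge P((\pi^*E)(n\pi^*H))$, which is exactly the semistability inequality for $\pi^*E$. As this holds for every torsion-free quotient, $\pi^*E$ is $\pi^*H$-semistable.

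The main obstacle I anticipate is the bookkeeping in the second paragraph: making sure that ``$\ge$'' rather than ``$>$'' is genuinely the best one can conclude (it is — $\pi^*E$ need not be stable, since $\pi^*$ of a stable sheaf can become strictly semistable, e.g. if $E\cong E\otimes K_X$ its pullback decomposes), and that the comparison of reduced Hilbert polynomials under $\pi_*$ is exact rather than merely asymptotic. The latter needs only that $\pi$ is finite flat of degree $d$ so that ranks multiply by $d$ and Euler characteristics are preserved; no subtlety with $K_X$ enters here. The one place $K_X\equiv 0$ is essential is in asserting that each $E\otimes K_X^{-i}$ has the \emph{same} reduced Hilbert polynomial as $E$, so that $\pi_*\pi^*E$ is semistable and not merely of the same slope; I would state this as a one-line consequence of $\chi(E\otimes K_X^{-i}(nH))=\chi(E(nH))$ when $c_1(K_X)\cdot H=0$ and $K_X^2=0$, which follows from Riemann–Roch.
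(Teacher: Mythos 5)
Your proof is correct, but it takes a genuinely different route from the paper. The paper argues by contradiction using the Harder--Narasimhan filtration: if $\pi^*E$ were unstable, the uniqueness of the HN filtration would make its maximal destabilizing subsheaf $G$-invariant, hence it would descend to a subsheaf of $E$ contradicting stability (using $K_X\equiv 0$ to compare reduced Hilbert polynomials downstairs). You instead avoid Galois descent entirely: you push a destabilizing quotient forward, identify $\pi_*\pi^*E$ with $\bigoplus_{i=0}^{d-1}E\otimes K_X^{-i}$ via the projection formula, observe that this is a direct sum of stable sheaves with equal reduced Hilbert polynomial (hence semistable), and transfer the quotient inequality back up. Both arguments use $K_X\equiv 0$ in the same essential way and both are standard; the paper's is shorter, while yours replaces the descent of a $G$-invariant subsheaf along the \'etale quotient by the explicit computation of $\pi_*\mathcal{O}_{\tilde X}$, which the paper records anyway. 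One small slip: with the paper's normalization $P(F(n))=\chi(F(nH))/r(F)$, the reduced Hilbert polynomial of $\mathcal F$ on $(\tilde X,\pi^*H)$ is $d$ times that of $\pi_*\mathcal F$ on $(X,H)$ (since $\chi$ is preserved but the rank multiplies by $d$), not equal to it; this is harmless because the uniform positive factor $d$ preserves all the inequalities you need, but the sentence as written is off by that factor.
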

\begin{proof}
Otherwise, $\pi^* E$ has a non-trivial Harder-Narashimhan filtration (HNF)
with respect to $\pi^* H$.
Let $F$ be the first part of HNF.
By the uniqueness of HNF, $F$ has a natural structure of $G$-subsheaf of $\pi^*(E)$, 
and so $F$ descends to a subsheaf $F_0$ of $E$. This $F_0$ breaks the stability
of $E$, since $K_X$ is numerically equivalent to $0$.
\end{proof}
\begin{lem}\label{lem:ExtG}
The natural map $\Ext^1_X(E,E)\rightarrow \Ext^1_{\tilde{X}}(\pi^*(E),\pi^*(E))$
induces the isomorphism
$\Ext^1(E,E)\simeq \Ext^1(\pi^*(E),\pi^*(E))^G$.
\end{lem}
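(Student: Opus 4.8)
The plan is to use the description $\pi_*\mathcal{O}_{\tilde X}=\bigoplus_{i=0}^{d-1}K_X^{-i}$ together with the projection formula, and then pass to $G$-invariants. First I would recall that for the finite étale $G$-cover $\pi$, one has the adjunction $\Ext^1_{\tilde X}(\pi^*E,\pi^*E)\cong\Ext^1_X(E,\pi_*\pi^*E)$, and by the projection formula $\pi_*\pi^*E\cong E\otimes\pi_*\mathcal{O}_{\tilde X}\cong\bigoplus_{i=0}^{d-1}E\otimes K_X^{-i}$. Hence
\begin{equation*}
\Ext^1_{\tilde X}(\pi^*E,\pi^*E)\;\cong\;\bigoplus_{i=0}^{d-1}\Ext^1_X(E,E\otimes K_X^{-i}).
\end{equation*}
The natural pull-back map $\Ext^1_X(E,E)\to\Ext^1_{\tilde X}(\pi^*E,\pi^*E)$ is, under this identification, the inclusion of the $i=0$ summand (composed with the unit $\mathcal{O}_X\to\pi_*\mathcal{O}_{\tilde X}$).

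Next I would identify the $G$-action. Since $\pi$ is the quotient by the free $G$-action, the group $G$ acts on $\pi_*\mathcal{O}_{\tilde X}$ and the decomposition $\bigoplus_i K_X^{-i}$ is, after tensoring suitably, the isotypic decomposition for $G$ acting through a character (in the cyclic case $G=\ZZ/d$ this is literally the eigenspace decomposition; in the hyper-elliptic case one argues componentwise). Thus the induced $G$-action on $\bigoplus_{i=0}^{d-1}\Ext^1_X(E,E\otimes K_X^{-i})$ permutes/scales the summands so that the invariant part is exactly the summand on which $G$ acts trivially, namely the one corresponding to the trivial character. That summand is $\Ext^1_X(E,E\otimes K_X^{0})=\Ext^1_X(E,E)$, and it is precisely the image of the pull-back map. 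Taking $G$-invariants of the displayed isomorphism therefore yields $\Ext^1_X(E,E)\xrightarrow{\ \sim\ }\Ext^1_{\tilde X}(\pi^*E,\pi^*E)^G$, as claimed. Note $|G|=d$ is prime to the characteristic (we are over $\CC$), so taking $G$-invariants is exact and commutes with the above identifications.

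The main obstacle is making the equivariance bookkeeping precise: one must check that the isomorphism $\pi_*\pi^*E\cong E\otimes\pi_*\mathcal{O}_{\tilde X}$ is $G$-equivariant (with $G$ acting trivially on the $E$ factor and via deck transformations on $\pi_*\mathcal{O}_{\tilde X}$), and that under the resulting grading the pull-back map lands in the trivial-character piece — i.e. that no nontrivial isotypic component meets the image. For $X$ Enriques this is transparent since $G=\ZZ/2$ and $K_X$ is the nontrivial $2$-torsion line bundle, so $\Ext^1_X(E,E)\oplus\Ext^1_X(E,E\otimes K_X)$ is the $\pm 1$-eigenspace decomposition and the invariants are the first summand. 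For the hyper-elliptic case the same argument runs once one writes $\pi_*\mathcal{O}_{\tilde X}$ in terms of the characters of $G$; this is routine but should be stated. Everything else — adjunction, projection formula, exactness of invariants in characteristic zero — is standard.
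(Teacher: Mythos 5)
Your proposal is correct and follows essentially the same route as the paper: both reduce $\Ext^1_{\tilde X}(\pi^*E,\pi^*E)$ via the projection formula (the paper phrases it as $\bR\pi_*$ applied to a locally free resolution of the Hom complex, you as adjunction plus $\pi_*\pi^*E\cong E\otimes\pi_*\mathcal O_{\tilde X}$) to $\bigoplus_{i=0}^{d-1}\Ext^1_X(E,E\otimes K_X^{-i})$, and then identify the $G$-invariants with the summand coming from $(\pi_*\mathcal O_{\tilde X})^G=\mathcal O_X$. Your extra remarks on the isotypic decomposition and exactness of invariants in characteristic zero only make explicit what the paper leaves implicit.
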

\begin{proof}
Let $E_{\bullet}\rightarrow E$ be a locally-free resolution of $E$.
 \begin{multline*}
\Ext^1(\pi^*(E), \pi^*(E))= H^1(\bR\Gamma_{\tilde{X}}(\pi^*(E_{\bullet}^{\vee}\otimes
E_{\bullet}))) \\
= H^1(\bR\Gamma_X \bR\pi_*(\pi^*(E^{\vee}_{\bullet}\otimes E_{\bullet}))) 
= H^1(\bR\Gamma_X(E_{\bullet}^{\vee}\otimes E_{\bullet}\otimes \pi_*
{\mathcal O}_{\tilde{X}})) \\
= \oplus_{i=0}^{d-1} H^1(\bR\Gamma_X(E_{\bullet}^{\vee}\otimes E_{\bullet} 
\otimes K_X^{-i}))= \oplus_{i=0}^{d-1} \Ext^1(E, E\otimes K_X^{-i}).
 \end{multline*}
Since $\pi_*({\mathcal O}_{\tilde{X}})^G= \Ox$, this lemma holds.
\end{proof}
Now, let $f:E\rightarrow E(K_X)$ be a nonzero traceless homomorphism. 
Since $K_X$ is numerically equivalent to zero and $E$ is stable, 
this $f$ is isomorphism and so $\det(f)\neq 0$.
Fix an isomorphism ${\mathcal O_{\tilde{X}}}\simeq \pi^*(K_X)$,
and let $t\in \Gamma(\pi^*(K_X))$ be the image of $1$ by this.
When we denote the eigenpolynomial of $\pi^*f$ by $P_{\pi^*f}(t)$,
we can decompose it into eigenvalues 
\[ P_{\pi^*f}(t)= \prod_i (t-a_i)^{n_i}, \]
where $a_i$ are elements in $H^0(\Ktx)$ which differ from each other, 
from the fact that $\Ktx=\Otx$.
Let us fix $a_1$, and pick any $g\in G$.
Then $\det(f)\neq 0$ implies $a_1\neq 0$ , and
$g(\pi^*(f))=\pi^*(f)$ implies $g(a_1)=a_i$ with some $i$.
\begin{lem}\label{lem:FreeAct-a1}
 If $g\neq e$, then $g(a_1)\neq a_1$.
\end{lem}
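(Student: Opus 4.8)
The plan is to reduce the whole statement to one fact: that the finite group $G$ acts \emph{faithfully} on the one-dimensional space $H^{0}(\tilde{X},\Ktx)$. Since $\Ktx=\Otx$ and $\tilde{X}$, being a K3 or an abelian surface, is connected, this space is indeed a line, so $G$ acts on it through a character $\chi\colon G\to\CC^{*}$, and $g(a_1)=\chi(g)\,a_1$ for every $g\in G$. As $a_1\neq0$ (already noted, since $\det f\neq0$), the assertion that $g(a_1)\neq a_1$ for $g\neq e$ is literally the statement that $\chi(g)\neq1$ for $g\neq e$, i.e.\ that $\chi$ is faithful.

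First I would record that $K_X^{\,j}\neq\Ox$ for $1\le j\le d-1$: this follows from
\[
 \CC=H^{0}(\tilde{X},\Otx)=H^{0}(X,\pi_{*}\Otx)=\bigoplus_{i=0}^{d-1}H^{0}(X,K_X^{-i}),
\]
which forces $H^{0}(X,K_X^{-i})=0$, hence $K_X^{-i}\neq\Ox$, for $1\le i\le d-1$. To deduce faithfulness of $\chi$, suppose $N:=\Ker\chi\neq\{e\}$. Then $a_1$ is a nonzero, hence nowhere-vanishing, section of $\Otx$ which is $N$-invariant, so it descends to a nowhere-vanishing $2$-form on the smooth quotient $Z:=\tilde{X}/N$; thus $K_Z={\mathcal O}_Z$. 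The induced map $\rho\colon Z\to X$ is finite \'etale of degree $d/|N|$ (the residual $G/N$-action on $Z$ is again free), so $K_Z=\rho^{*}K_X$, and applying the norm homomorphism gives $K_X^{\,d/|N|}=\operatorname{Nm}_{\rho}(\rho^{*}K_X)=\operatorname{Nm}_{\rho}({\mathcal O}_Z)=\Ox$. But $|N|\ge2$ (and $d\ge2$), so $d/|N|$ lies in $\{1,\dots,d-1\}$, contradicting $K_X^{\,d/|N|}\neq\Ox$. Hence $\chi$ is faithful, and for $g\neq e$ we conclude $g(a_1)=\chi(g)\,a_1\neq a_1$.

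The main point, and the only step that is not purely formal, is the faithfulness of $\chi$; this is precisely where the hypotheses enter. The vanishing $p_g(X)=0$ (through $H^{0}(\tilde{X},\Otx)=\CC$), together with the explicit shape $\pi_{*}\Otx=\bigoplus_{i=0}^{d-1}K_X^{-i}$ of the covering, force $K_X$ to have order exactly $d$ in $\Pic(X)$, and that is what prevents $\chi$ from having a nontrivial kernel; everything else (passing to $\pi_{*}$, descending $a_1$ to the quotient, and the concluding rescaling) is routine bookkeeping with the free action of $G$ on $\tilde{X}$.
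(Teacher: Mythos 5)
Your argument is correct, and it ultimately rests on the same numerical input as the paper's proof, namely that $\Gamma(jK_X)=0$ (equivalently $K_X^{j}\neq\Ox$) for $0<j<d$, which you extract correctly from $\pi_*\Otx=\oplus_{i=0}^{d-1}K_X^{-i}$ and $h^0(\Otx)=1$; but the mechanism you use to reach the contradiction is genuinely different. The paper argues directly on the orbit: if the stabilizer of $a_1$ were nontrivial, the orbit $\{a_1,\dots,a_m\}$ would have $m<d$ elements, and the product $a_1\cdots a_m$ would be a nonzero $G$-invariant section of $m\Ktx$, hence a nonzero element of $\Gamma(mK_X)=0$. You instead linearize first ($G$ acts on the line $H^0(\Ktx)$ through a character $\chi$, so the stabilizer of $a_1$ is exactly $\ker\chi$), then descend $a_1$ to a trivialization of $K_Z$ on the intermediate \'etale quotient $Z=\tilde{X}/\ker\chi$ and apply the norm to get $K_X^{d/|\ker\chi|}=\Ox$. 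The two contradictions are the same statement in different clothing --- the norm of your trivialization of $K_Z$ is, up to a scalar, the paper's orbit product --- but your version needs the extra input $h^0(\Ktx)=1$ to define $\chi$, whereas the orbit-product argument does not; in exchange, the character-theoretic framing makes it transparent why the lemma holds (the one-dimensional representation $H^0(\pi^*K_X)$ is forced to be a faithful character because $K_X$ has order exactly $d$). One point you must keep straight, and do handle correctly by immediately treating $a_1$ as a $2$-form: the relevant $G$-action is the one on $H^0(\pi^*K_X)\simeq H^0(\Ktx)$ coming from pullback of forms, not the trivial action on the constants $H^0(\Otx)$; the trivialization $\pi^*K_X\simeq\Otx$ is not $G$-equivariant, and using the wrong action would make the statement false.
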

\begin{proof}
Otherwise, one can indicate the orbit $O_{a_1}$ as $\{a_1,\dots, a_m \}$
with some $m<|G|=d$. Then $a_1\cdot \dots \cdot a_m$ lies in
$\Gamma(m\Ktx)^G=\Gamma(mK_X)$, but we know that $\Gamma(mK_X)=0$ for
$m<d$. This contradicts to the fact that $\det(f)\neq 0$.
\end{proof}
\begin{lem}\label{lem:fG=0}
$ \prod_{g\in G}(\pi^*f-g(a_1))=0$ in $\Hom(\pi^* E, \pi^* E(d K_X)))$.
\end{lem}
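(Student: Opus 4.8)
The plan is to use stability of $E$ to force the composite $\psi:=\prod_{g\in G}(\pi^*f-g(a_1))\in\Hom(\pi^*E,\pi^*E(dK_X))$ to be a scalar multiple of an isomorphism, and then to see that this scalar must vanish because $a_1$ is an eigenvalue of $\pi^*f$. Here each factor $N_g:=\pi^*f-g(a_1)\cdot\id_{\pi^*E}$ is a morphism $\pi^*E\to\pi^*E\otimes\pi^*K_X$, where $g(a_1)\in H^0(\pi^*K_X)$ through $H^0(\Ktx)=H^0(\Otx)\cong H^0(\pi^*K_X)$, and the product is formed with the evident twists so that $\psi$ lands in $\pi^*E\otimes(\pi^*K_X)^{\otimes d}=\pi^*E(dK_X)$.

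First I would check that $\psi$ is $G$-invariant. The $N_g$ pairwise commute after the twists (each is built from $\pi^*f$ and a scalar section), so $\psi$ is independent of the order of the product. Since $f$ is defined on $X$, the morphism $\pi^*f$ is $G$-invariant for the canonical linearizations; and the action of $g'\in G$ carries the section $g(a_1)$ of $\pi^*K_X$ to $(g'g)(a_1)$, whence $g'\cdot N_g=N_{g'g}$. Thus $g'$ merely permutes the factors of the commuting product $\psi$, so $g'\cdot\psi=\psi$.

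Next, descent along the \'etale $G$-torsor $\pi$ --- the same principle used in Lemma \ref{lem:ExtG} --- produces a morphism $\psi_0\in\Hom_X(E,E(dK_X))$ with $\pi^*\psi_0=\psi$. As $K_X$ is numerically trivial, $E(dK_X)$ has the same reduced Hilbert polynomial as $E$, so, exactly as was observed for $f$ itself, $\hom_X(E,E(dK_X))\le 1$ and every nonzero element of this space is an isomorphism. Hence $\psi_0$, and with it $\psi$, equals $c$ times an isomorphism for some $c\in\CC$.

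It remains to show $c=0$, which I would do at the generic point $\eta$ of $\tilde{X}$. Under the fixed trivialization $\pi^*K_X\cong\Otx$, the stalk $(\pi^*f)_\eta$ is an endomorphism of the $r$-dimensional $\CC(\tilde{X})$-vector space $(\pi^*E)_\eta$ whose characteristic polynomial is $P_{\pi^*f}$; since $a_1$ is a root of $P_{\pi^*f}$, the factor $(N_e)_\eta$ has determinant $(-1)^rP_{\pi^*f}(a_1)=0$, so $\det\psi$ vanishes at $\eta$ and therefore $\det\psi=0$. But if $c\ne 0$ then $\psi$ is an isomorphism and $\det\psi$ is nowhere zero; hence $c=0$ and $\psi=0$. (Only that $a_1$ itself is an eigenvalue is used, not Lemma \ref{lem:FreeAct-a1}.) The one step demanding real care is the $G$-equivariance bookkeeping in the second paragraph --- that the $G$-action permutes the $N_g$ by left translation on $G$, with no extra twisting coming from the nontrivial $G$-action on $H^0(\Ktx)$; the remaining ingredients (descent, the Schur-type lemma for stable sheaves, and the determinant computation at $\eta$) are routine.
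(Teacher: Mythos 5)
Your proof is correct and follows essentially the same route as the paper: show $\psi=\prod_{g}(\pi^*f-g(a_1))$ is $G$-invariant and descends, note that a nonzero element of $\Hom(E,E(dK_X))$ would be injective by stability, and conclude from the non-injectivity of the factor $\pi^*f-a_1$ (since $a_1$ is an eigenvalue). The paper simply asserts the last step as ``obviously'' not injective, whereas you justify it via the determinant at the generic point; otherwise the arguments coincide.
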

\begin{proof}
This homomorphism $f^G$ lies in
$\Hom(\pi^* E,\pi^* E(d K_X))^G=\Hom(E,E(d K_X))$.
If $f^G\neq 0$, then $f^G$ should be injective since $E$ is stable.
However, $f^G$ is not injective obviously.
\end{proof}
For $g\in G$, denote $\Ker(\pi^*f-g(a_1))\subset \pi^* E$ by $F_g$.
\begin{lem}\label{lem:eigen-decomp}
The natural map $\oplus_{g\in G} F_g \rightarrow \pi^* E$ is isomorphic.
\end{lem}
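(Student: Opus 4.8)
The plan is to produce the decomposition by the classical device of Lagrange--interpolation idempotents attached to a square-free annihilating polynomial, carried out inside the $\CC$-algebra $\End(\pi^*E)$. After the chosen identification $\Otx\simeq\pi^*(K_X)$ we regard $\pi^*f$ as an element of $\End(\pi^*E)$, and Lemma~\ref{lem:fG=0} tells us that it is killed by the monic polynomial $Q(t):=\prod_{g\in G}\bigl(t-g(a_1)\bigr)$, whose coefficients lie in $H^0(\Ktx)=H^0(\Otx)=\CC$. First I would record that the $d$ scalars $\{g(a_1)\}_{g\in G}$ are pairwise distinct: if $g(a_1)=g'(a_1)$ then $(g'^{-1}g)(a_1)=a_1$, so $g=g'$ by Lemma~\ref{lem:FreeAct-a1}. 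Hence $Q$ has simple roots and all differences $g(a_1)-h(a_1)$ are invertible in $\CC$.

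Next I would introduce, inside the commutative subring $\CC[\pi^*f]\subset\End(\pi^*E)$, the elements
\[
 e_g:=\prod_{\substack{h\in G\\ h\neq g}}\frac{\pi^*f-h(a_1)}{g(a_1)-h(a_1)}\qquad(g\in G),
\]
and check the two polynomial identities that make them a complete system of orthogonal idempotents: $\sum_{g\in G}e_g=\id$, because the left side is a polynomial in $\pi^*f$ of degree $\le d-1$ that interpolates the constant $1$ at the $d$ distinct points $g(a_1)$; and $e_g e_{g'}=0$ for $g\neq g'$, because the product of the two defining products contains every factor $\pi^*f-h(a_1)$, hence is divisible by $Q(\pi^*f)=0$. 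From $\sum e_g=\id$ and orthogonality one also gets $e_g^2=e_g$. Consequently $\pi^*E=\bigoplus_{g\in G}e_g\!\cdot\!\pi^*E$ as coherent sheaves.

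Finally I would identify $e_g\!\cdot\!\pi^*E$ with $F_g=\Ker(\pi^*f-g(a_1))$. The inclusion $e_g\!\cdot\!\pi^*E\subseteq F_g$ holds since $(\pi^*f-g(a_1))e_g$ is, up to the nonzero scalar $\prod_{h\neq g}(g(a_1)-h(a_1))$, equal to $Q(\pi^*f)=0$. Conversely, for $g'\neq g$ the operator $e_{g'}$ has $\pi^*f-g(a_1)$ among its factors, so $e_{g'}$ annihilates $F_g$; combined with $\sum_{g'}e_{g'}=\id$ this forces $e_g|_{F_g}=\id_{F_g}$, i.e.\ $F_g\subseteq e_g\!\cdot\!\pi^*E$. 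Therefore the natural map $\bigoplus_{g\in G}F_g\to\pi^*E$, the sum of the inclusions, is precisely the decomposition into idempotent summands, hence an isomorphism.

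I do not expect a genuine obstacle here: the statement is just ``an endomorphism with square-free annihilating polynomial is diagonalizable'', transported to the additive category of $\Otx$-modules. The only point that needs care is that the whole argument rests on the eigen-scalars $g(a_1)$ living in $H^0(\Otx)=\CC$, so that their pairwise differences are invertible and the Lagrange idempotents exist, and on their being genuinely distinct --- which is exactly Lemma~\ref{lem:FreeAct-a1} promoted by the group law, as in the first paragraph.
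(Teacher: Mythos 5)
Your proof is correct, and it rests on exactly the same three inputs as the paper's: Lemma~\ref{lem:fG=0} (so that $Q(\pi^*f)=0$ for $Q(t)=\prod_{g}(t-g(a_1))$), Lemma~\ref{lem:FreeAct-a1} (promoted by the group law to pairwise distinctness of the $g(a_1)$), and the triviality of $\Ktx$ (so the eigenvalues and their differences are invertible scalars). The execution differs: you build the complete system of orthogonal Lagrange idempotents $e_g\in\CC[\pi^*f]$ all at once and identify $e_g\cdot\pi^*E$ with $F_g$ from both sides, whereas the paper splits off a single eigensheaf at a time --- it shows the composite $F_e\hookrightarrow\pi^*E\twoheadrightarrow F_e'=\IIm\prod_{g\neq e}(\pi^*f-g(a_1))$ is injective, then uses $f^{-(d-1)}$ to see that $\prod_{g\neq e}(\pi^*f-g(a_1))$ acts on $F_e$ as the nonzero constant $\prod_{g\neq e}(a_1-g(a_1))$, concluding $\pi^*E=F_e\oplus(\pi^*E/F_e)$ and iterating. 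Your version is the simultaneous form of the same ``square-free annihilating polynomial implies diagonalizable'' argument; it buys you the decomposition in one step and makes the identification $e_g\cdot\pi^*E=F_g$ completely explicit, while the paper's inductive splitting avoids writing down the idempotents but has to justify the iteration. Both are complete; no gap in yours.
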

\begin{proof}
Let us consider the map
\begin{equation}\label{eq:Fe-Qe}
 \prod_{g\neq e} (\pi^*f-g(a_1)) : F_e \hookrightarrow \pi^*E \rightarrow
 F'_e:=\IIm\prod_{g\neq e} (\pi^*f-g(a_1)). 
\end{equation}
If $\alpha \in F_e$ belongs to the kernel of this map, then
$0=\prod_{g\neq e} (\pi^*f-g(a_1))(\alpha)= \prod_{g\neq e}(a_1-g(a_1))(\alpha)$,
and then Lemma \ref{lem:FreeAct-a1} and $\Ktx=\Otx$ deduces that $\alpha=0$.
Thus the map \eqref{eq:Fe-Qe} is injective.
Next, 
the map $f^{-(d-1)}: \pi^* E((d-1)K_X) \rightarrow \pi^* E$ induces the map
$f^{-(d-1)}: F'_e \rightarrow F_e$ by Lemma \ref{lem:fG=0}.
One can check that 
$f^{-(d-1)}\circ \prod_{g\neq e} (\pi^*f-g(a_1)) :F_e \rightarrow F_e$
is the multiple map by a non-zero constant, and so is isomorphic
by Lemma \ref{lem:FreeAct-a1}.
Thereby, the map \eqref{eq:Fe-Qe} is bijective, and hence 
we have that $\pi^* E= F_e \oplus (\pi^* E/F_e)$.
Repeating this, we can show this lemma.
\end{proof}
Since $F_g\neq 0$, $P(F_g(n))=P(\pi^*E(n))$ by Lemma \ref{lem:plbk-stable}
and Lemma \ref{lem:eigen-decomp}.
The homomorphism $\pi^*f-g(a_1)$ induces exact sequences
\begin{align}\label{eq:1st-exseq1}
 0 \longrightarrow F_g \overset{i_g}{\longrightarrow} & \pi^*E 
 \overset{p_g}{\longrightarrow} G_g
 \longrightarrow 0, \qquad \text{and} \\ \label{eq:1st-exseq2}
 0 \longrightarrow G_g \overset{j_g}{\longrightarrow} & \pi^*E(K_X) 
 \overset{q_g}{\longrightarrow} Q_g \longrightarrow 0. 
\end{align}
\begin{prop}\label{prop:dimIm}
One can find $h\in G$ with $h\neq e$ as follows:
the dimension of the image of the natural map, 
\begin{equation}\label{eq:Def-Im}
  \Ext^1(Q_e(-K_X),F_h) \overset{i_{h*}}{\longrightarrow} \Ext^1(Q_e(-K_X), \pi^*E)
   \overset{q_e^*}{\longrightarrow} \Ext^1(\pi^*E, \pi^* E),
\end{equation}
which comes from the exact sequences \eqref{eq:1st-exseq1} and \eqref{eq:1st-exseq2}, 
is $-\frac{1}{d}\chi(E,E)$ or more.
\end{prop}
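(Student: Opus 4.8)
The plan is to exploit the eigenspace decomposition $\pi^*E = \bigoplus_{g \in G} F_g$ from Lemma \ref{lem:eigen-decomp} together with a dimension count of $\Ext$ groups. First I would observe that the composite map in \eqref{eq:Def-Im} can be understood via the snake/connecting maps: the sheaf $G_e$ sits in \eqref{eq:1st-exseq1} as a quotient of $\pi^*E$ with kernel $F_e$, and also in \eqref{eq:1st-exseq2} as a subsheaf of $\pi^*E(K_X)$ with quotient $Q_e$. Twisting \eqref{eq:1st-exseq2} by $-K_X$ gives $0 \to G_e(-K_X) \to \pi^*E \to Q_e(-K_X) \to 0$, so $\Ext^1(Q_e(-K_X), \pi^*E) \to \Ext^1(\pi^*E,\pi^*E)$ via $q_e^*$ precisely records extensions of $\pi^*E$ by itself that become trivial after restricting to the subsheaf $G_e(-K_X)$; and the image of $i_{h*}$ further restricts to those classes coming from the summand $F_h$. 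The goal is then to bound $\dim$ of this image from below by $-\tfrac{1}{d}\chi(E,E) = -\tfrac{1}{d}\chi(\pi^*E,\pi^*E)$ (the last equality because $\chi$ is multiplicative on the covering and $\pi_*\mathcal{O}_{\tilde X} = \bigoplus K_X^{-i}$ with each $\chi(E, E\otimes K_X^{-i}) = \chi(E,E)$ since $K_X \equiv 0$).

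The key computational step is to compare $\chi(Q_e(-K_X), F_h)$ with the kernels of $i_{h*}$ and $q_e^*$. Since $F_h$ is a subsheaf of $\pi^*E$ with the same reduced Hilbert polynomial (by Lemma \ref{lem:plbk-stable} and Lemma \ref{lem:eigen-decomp}, each $F_g$ is $\pi^*H$-semistable of the same slope), and $Q_e(-K_X)$ is a quotient of $\pi^*E$ of the same reduced Hilbert polynomial, the Euler characteristics $\chi(Q_e(-K_X), F_h)$ for the various $h$ should sum, over $h \in G$, to something controlled by $\chi(Q_e(-K_X), \pi^*E)$, which in turn relates to $\chi(\pi^*E,\pi^*E)$ through the exact sequence. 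Concretely, applying $\Hom(-,\pi^*E)$ to $0 \to G_e(-K_X) \to \pi^*E \to Q_e(-K_X) \to 0$ yields $\chi(\pi^*E,\pi^*E) = \chi(Q_e(-K_X),\pi^*E) + \chi(G_e(-K_X),\pi^*E)$, and by $\pi^*H$-semistability of equal slopes the "off-diagonal" pieces have controlled $\Hom$ and $\Ext^2$ (both bounded by small constants, essentially $0$ or $1$, since a nonzero map between semistable sheaves of equal slope is generically an isomorphism onto its image and here the sheaves are non-isomorphic for $h \neq e$). Averaging over the $d$ choices of $h$ then forces at least one $h \neq e$ for which $\dim \operatorname{Im}$ of \eqref{eq:Def-Im} is at least $\tfrac{1}{d-1}$ (or $\tfrac1d$) times a total that works out to $-\chi(E,E)$; one must be slightly careful that the bound lands on $-\tfrac1d\chi(E,E)$ and not something weaker.

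More precisely, I would set up the argument so that $\ker(q_e^* \circ i_{h*})$ consists of classes in $\Ext^1(Q_e(-K_X), F_h)$ that either die in $\Ext^1(Q_e(-K_X),\pi^*E)$ (image of the boundary map from $\Hom(Q_e(-K_X), G_h)$ or similar, bounded by semistability) or become trivial under $q_e^*$ (lifting to $\Ext^1(G_e(-K_X), \pi^*E)$, again boundable). Since $F_h$ is one of the $d$ summands and the total "extension space" $\Ext^1(\pi^*E,\pi^*E)$ has dimension $-\chi(\pi^*E,\pi^*E) + \hom + \ext^2 \geq -\chi(\pi^*E,\pi^*E)$, and since the images of \eqref{eq:Def-Im} over different $h \neq e$ together with the diagonal contributions should span (or nearly span) the relevant part, a pigeonhole gives the claimed $h$. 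The main obstacle will be controlling the correction terms — the $\Hom$ and $\Ext^2$ contributions between the various $F_g$, $G_g$, $Q_g$ — tightly enough that the averaging inequality is not eroded; this is where stability of $E$ (forcing $\hom(E,E(K_X)) \le 1$ and the isomorphism-or-zero dichotomy for maps between the equal-slope semistable pieces) does the real work, and I would need to track each such term by hand and verify it contributes nothing to the leading order in the count.
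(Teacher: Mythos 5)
Your skeleton --- use the eigenspace decomposition $\pi^*E=\bigoplus_{g\in G}F_g$, reduce to an Euler--characteristic count, and pigeonhole over $h$ --- matches the paper's, but the two steps you explicitly defer (``controlling the correction terms'' and ``being careful that the bound lands on $-\tfrac1d\chi(E,E)$'') are precisely where the content lies, and neither is resolved as written. The first gap: you plan to bound the kernels of $i_{h*}$ and $q_e^*$ by semistability estimates on $\Hom$ and $\Ext^2$ between the pieces. In fact there are no correction terms to control. Because $\pi^*K_X\simeq\Otx$ and $\pi^*f-a_1$ acts diagonally on $\bigoplus_g F_g$ (it kills $F_e$ and acts by the nowhere-vanishing section $g(a_1)-a_1$ on $F_g$ for $g\neq e$), both sequences \eqref{eq:1st-exseq1} and \eqref{eq:1st-exseq2} split and $Q_e(-K_X)\simeq F_e$. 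Hence $i_{h*}$ and $q_e^*$ are split injections, the composite \eqref{eq:Def-Im} is injective, and the dimension of its image is exactly $\ext^1(Q_e(-K_X),F_h)=\ext^1(F_e,F_h)\geq-\chi(F_e,F_h)$. Your route via ``small constants, essentially $0$ or $1$'' would at best give a bound eroded by those constants, not the clean $-\tfrac1d\chi(E,E)$.

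The second gap is in the pigeonhole itself. From $\chi(E,E)=\sum_{g\in G}\chi(F_e,F_g)$ (which follows from Lemma \ref{lem:eigen-decomp} and $F_g=g^*F_e$) you can only conclude that \emph{some} $g\in G$ satisfies $\chi(F_e,F_g)\leq\chi(E,E)/d$, and that $g$ could be $e$ --- which is useless for the proposition. The paper rules this out with the Hodge index theorem: since $c_1(F_e)$ and $c_1(F_g)$ have equal squares and equal degree against $\pi^*H$, one gets $\chi(F_e,F_e)-\chi(F_e,F_g)=-c_1(F_e)^2+c_1(F_e)c_1(F_g)=-\tfrac12\left(c_1(F_e)-c_1(F_g)\right)^2\geq0$, so the $g=e$ term is maximal among the $d$ summands and cannot be the only one at or below the average; hence some $h\neq e$ has $\chi(F_e,F_h)\leq\chi(E,E)/d$, giving $\ext^1(F_e,F_h)\geq-\tfrac1d\chi(E,E)$. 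Your alternative of averaging only over the $d-1$ elements $h\neq e$ yields $\tfrac{1}{d-1}\bigl(\chi(E,E)-\chi(F_e,F_e)\bigr)$, which is weaker than needed unless you again bound $\chi(F_e,F_e)$ from below --- the same missing Hodge-index input. You should add both ingredients; with them the proof collapses to three lines and none of the $\Hom$/$\Ext^2$ bookkeeping you anticipate is necessary.
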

We shall prove this proposition later; we presume it proved for now.
From now on, suppose that the hypothesis in Theorem \ref{thm:main} holds,
which implies that $-\chi(E,E)/d -d^2+1 \geq 3$. Hence
there is a nonzero element $\alpha\in\Ext^1(\pi^*E, \pi^* E)$ 
lying in the image of the map \eqref{eq:Def-Im}.
For any $h\in G$, we have a following commutative diagram
\begin{equation}\label{eq:comdiag}
\xymatrix{
\Ext^1(Q_e(-K_X),F_h) \ar[r]_{i_{h*}} \ar[d]^{q_e^*} &
\Ext^1(Q_e(-K_X),\pi^*E)  \ar[d]_{q_e^*} & \\ 
\Ext^1(\pi^*E,F_h) \ar[r]_{i_{h*}} & \Ext^1(\pi^*E, \pi^*E) \ar[r]_{p_{h*}} 
\ar[d]^{j_e^*} &
\Ext^1(\pi^*E, G_h) \ar[d]^{j_{h*}} \\
  & \Ext^1(G_e(-K_X),\pi^*E) \ar[d]^{p^*_e} & \Ext^1(\pi^* E,\pi^*E(K_X)) \\
   & \Ext^1(\pi^*E(-K_X),\pi^* E), & }
\end{equation}
and it holds that $p_e^*\circ j_e^*=(\pi^* f-a_1)^*$ and
$j_{h*}\circ p_{h*}=(\pi^*f-h(a_1))_*$.
Thus $(\pi^*f -a_1)^*(\alpha)=0$ and $(\pi^*f-h(a_1))_*(\alpha)=0$.
From the definition of pull-back and push-forward, it implies that
\begin{equation}\label{eq:alpha-seisitu}
\alpha\circ\pi^*f= (\pi^*f)^*(\alpha)= a_1\alpha \qquad\text{and}\quad
\pi^* f\circ\alpha=(\pi^* f)_*(\alpha)=h(a_1)\alpha
\end{equation}
in $\Ext^1(\pi^* E, \pi^* E(K_X))$.
Here we denote $\sum_{g\in G} g(\alpha) \in \Ext^1(\pi^*(E), \pi^*(E))$
by $\alpha^G$.
If $\alpha^G =0$, then $-\alpha\in \Ker(\pi^* f-a_1)$ and 
$\sum_{g\neq e}g(\alpha)$ should belong
to $\Ker(\prod_{g\neq e}(\pi^*f-g(a_1)))$, but
$\Ker(\pi^* f-a_1)\cap\Ker(\prod_{g\neq e}(\pi^*f-g(a_1)))=0$ 
by Lemma \ref{lem:FreeAct-a1}.
Thus $\alpha^G\neq 0$.
From Lemma \ref{lem:ExtG}, $\alpha^G$ descends to a nonzero element
$\bar{\alpha}\in\Ext^1(E,E)$.
Since $f$ is isomorphic, by the Serre duality
we have some $\beta\in\Ext^1(E,E)$ such that
\begin{equation}\label{eq:beta}
\tr(f\circ \bar{\alpha}\circ \beta)\neq 0.
\end{equation}
\begin{lem}
If $\beta$ satisfies \eqref{eq:beta}, then 
$\tr(f\circ \bar{\alpha}\circ \beta+ f\circ\beta\circ\bar{\alpha})\neq 0$.
\end{lem}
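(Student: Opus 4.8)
The plan is to show that the ``cross term'' $\tr(f\circ\beta\circ\bar\alpha)$ either vanishes or, if not, that it cannot cancel $\tr(f\circ\bar\alpha\circ\beta)$. First I would lift everything to $\tilde X$ via $\pi^*$: since $\bar\alpha$ is the descent of $\alpha^G=\sum_{g\in G}g(\alpha)$ and $f$ pulls back to $\pi^*f$, the trace $\tr(f\circ\bar\alpha\circ\beta)$ computed on $X$ agrees, up to the factor $d=|G|$, with $\tr_{\tilde X}(\pi^*f\circ\alpha^G\circ\pi^*\beta)$, and similarly for the other ordering. So it suffices to analyse $\tr_{\tilde X}\big(\pi^*f\circ\alpha^G\circ\pi^*\beta + \pi^*f\circ\pi^*\beta\circ\alpha^G\big)$.

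The key input is the eigenvalue relation \eqref{eq:alpha-seisitu}: each summand $g(\alpha)$ satisfies $g(\alpha)\circ\pi^*f = g(a_1)\,g(\alpha)$ and $\pi^*f\circ g(\alpha) = g(h(a_1))\,g(\alpha) = (hg)(a_1)\,g(\alpha)$, because conjugating \eqref{eq:alpha-seisitu} by $g$ replaces $a_1$ by $g(a_1)$ on the right and $h(a_1)$ by $g(h(a_1))=(gh)(a_1)$ on the left; I should be careful about whether it is $gh$ or $hg$, but the point is that $g(\alpha)$ is still a simultaneous eigenvector for left- and right-multiplication by $\pi^*f$, with eigenvalues among the $a_i$. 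Now consider $\tr(\pi^*f\circ g(\alpha)\circ\pi^*\beta)$ versus $\tr(\pi^*f\circ\pi^*\beta\circ g(\alpha))$. Using cyclicity of the trace and the fact that $\pi^*f$ commutes with itself, move $\pi^*f$ around: in the first expression $\pi^*f$ sits to the left of $g(\alpha)$, so $\pi^*f\circ g(\alpha) = (gh)(a_1)\,g(\alpha)$ (eigenvalue on the left); in the second, $g(\alpha)$ is multiplied on the right by $\pi^*\beta$ and then $\pi^*f$ acts — but by cyclicity $\tr(\pi^*f\circ\pi^*\beta\circ g(\alpha)) = \tr(g(\alpha)\circ\pi^*f\circ\pi^*\beta)$ and $g(\alpha)\circ\pi^*f = g(a_1)\,g(\alpha)$. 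Hence
\begin{equation*}
\tr(\pi^*f\circ g(\alpha)\circ\pi^*\beta + \pi^*f\circ\pi^*\beta\circ g(\alpha))
= \big((gh)(a_1)+g(a_1)\big)\,\tr(g(\alpha)\circ\pi^*\beta).
\end{equation*}
Since $\Ktx=\Otx$ and the $a_i$ are distinct nonzero constants with $\sum$ lying in a trivial space only in the degenerate case ruled out by Lemma \ref{lem:FreeAct-a1}, the coefficient $(gh)(a_1)+g(a_1)$ is a nonzero constant: it vanishes only if $(gh)(a_1) = -g(a_1)$, and I would argue this is impossible because $g(a_1)$ and $(gh)(a_1)$ are both among the roots $a_i$, which (being in the orbit of $a_1$ under the free action) are ``positive'' in a suitable sense — more precisely, were $(gh)(a_1)=-g(a_1)$ then $a_1 = -g^{-1}(gh)(a_1)$-type relations combined with taking products over the orbit would force a nonzero section of $mK_X$ for $m<d$ to vanish or a contradiction with $\det(\pi^*f)\ne 0$, exactly in the spirit of Lemmas \ref{lem:FreeAct-a1} and \ref{lem:fG=0}.

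Summing over $g\in G$ and recalling $\bar\alpha\leftrightarrow\alpha^G$, I get
$\tr(f\circ\bar\alpha\circ\beta + f\circ\beta\circ\bar\alpha) = \sum_{g}\lambda_g\,\tr(g(\alpha)\circ\pi^*\beta)$
with nonzero constants $\lambda_g$, whereas \eqref{eq:beta} says $\sum_g\tr(g(\alpha)\circ\pi^*\beta)\ne 0$ (this is just $\tr(f\circ\bar\alpha\circ\beta)\ne0$ rewritten, up to the isomorphism $f$ and a constant). The cleanest way to finish is to observe that only the summand $g=e$ contributes to \eqref{eq:beta} after projecting $\beta$ appropriately — indeed one may choose $\beta$ to be $\pi^*$ of a class, or use that the pairing $\tr(g(\alpha)\circ -)$ on $\Ext^1(\pi^*E,\pi^*E(K_X))$ singles out the eigen-component, so that $\tr(f\circ\bar\alpha\circ\beta) = c\cdot\tr(\alpha\circ\pi^*\beta)$ for the single relevant $g$, and then $\tr(f\circ\bar\alpha\circ\beta+f\circ\beta\circ\bar\alpha) = (\lambda_e/1)\cdot\tr(f\circ\bar\alpha\circ\beta)\ne0$. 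The main obstacle I anticipate is precisely this bookkeeping: making sure the cross term's coefficient is genuinely nonzero, i.e.\ ruling out $(gh)(a_1)+g(a_1)=0$, and tracking which single $g$ survives in \eqref{eq:beta} so the two traces differ by a nonzero scalar rather than by an a priori different linear combination. Everything else is formal manipulation of traces, $G$-equivariance, and the eigen-decomposition already established in Lemmas \ref{lem:eigen-decomp}--\ref{lem:fG=0}.
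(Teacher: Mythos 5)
Your overall strategy (pull back to $\tilde X$, expand $\bar\alpha$ as $\sum_g g(\alpha)$, use the eigenvalue relations \eqref{eq:alpha-seisitu} to compare the two orderings term by term) is the same as the paper's, but there is a genuine gap at the crucial sign. You invoke ``cyclicity of the trace'' to write $\tr(\pi^*f\circ\pi^*\beta\circ g(\alpha))=\tr(g(\alpha)\circ\pi^*f\circ\pi^*\beta)$. The trace on the Ext-algebra is not cyclic in this naive sense: for classes of Ext-degrees $i$ and $j$ one has $\tr(a\circ b)=(-1)^{ij}\tr(b\circ a)$ (this is exactly the property from \cite[p.~257]{HL:text} that the paper cites), and here both $g(\alpha)$ and $\pi^*f\circ\pi^*\beta$ have degree $1$, so a minus sign appears. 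With the correct sign the coefficient of $\tr(g(\alpha)\circ\pi^*\beta)$ in the sum of the two orderings is $gh(a_1)-g(a_1)=(\lambda-1)\,g(a_1)$, which is nonzero precisely because $\lambda=h(a_1)/a_1\neq 1$ by Lemma \ref{lem:FreeAct-a1}. With your sign it is $(\lambda+1)\,g(a_1)$, and you are forced to rule out $\lambda=-1$. That cannot be done: the $a_i$ are complex constants with no ``positivity,'' and in fact for an Enriques surface ($d=2$, $h\neq e$) tracelessness of $f$ forces $a_1+h(a_1)=0$, i.e.\ $\lambda=-1$ identically, so your coefficient vanishes in exactly the main case of the paper. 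The sign is therefore not bookkeeping; it is the whole content of the lemma.

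A secondary problem is your concluding step. You try to reduce $\sum_g \lambda_g\,\tr(g(\alpha)\circ\pi^*\beta)$ to a single surviving summand by ``projecting $\beta$'' or taking $\beta=\pi^*$ of a class; this is not justified and not needed. The paper's point is that the ratio $gh(a_1)/g(a_1)=g\bigl(h(a_1)/a_1\bigr)=\lambda$ is the \emph{same} constant for every $g$ (because $h(a_1)/a_1\in\Gamma(\Otx)^G\simeq\CC$), so the two weighted sums $\sum_g gh(a_1)\,\tr(g(\alpha)\circ\pi^*\beta)$ and $\sum_g g(a_1)\,\tr(g(\alpha)\circ\pi^*\beta)$ are globally proportional with ratio $\lambda$; combined with the graded-commutativity sign this yields $\tr(f\circ\bar\alpha\circ\beta+f\circ\beta\circ\bar\alpha)=(1-\tfrac{1}{\lambda})\tr(f\circ\bar\alpha\circ\beta)\neq 0$. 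Also note that $\sum_g\tr(g(\alpha)\circ\pi^*\beta)\neq 0$ is not ``just \eqref{eq:beta} rewritten'': \eqref{eq:beta} gives the nonvanishing of the sum weighted by the (distinct) constants $gh(a_1)$, not of the unweighted sum.
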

\begin{proof}
From the definition of $\bar{\alpha}$, we have
\begin{multline*}
\tr(\pi^*(f\circ\bar{\alpha}\circ \beta))=
\tr(\pi^* f \circ \pi^*\bar{\alpha} \circ \pi^*\beta)\\
= \sum_{g\in G} \tr(\pi^*f \circ g(\alpha)\circ \pi^*\beta )
= \sum_g \tr (gh(a_1)\cdot g(\alpha)\circ\pi^*(\beta)),
\end{multline*}
where one get the last equation by the action of $g$
on the second at \eqref{eq:alpha-seisitu}.
From a basic property of trace \cite[p. 257]{HL:text}, it holds that
\begin{multline*}
-\tr(\pi^*(f\circ\beta\circ\bar{\alpha}))
= \tr (\pi^*(\bar{\alpha}\circ f\circ \beta)) \\
= \sum_g \tr(g(\alpha)\circ \pi^* f \circ \pi^*\beta)
= \sum_g \tr(g(a_1)\cdot g(\alpha)\circ\pi^*(\beta)),
\end{multline*}
where one get the last equation by the action of $g$
on the first at \eqref{eq:alpha-seisitu}.
Since $a_1\in \Gamma(\pi^* K_X)=\Gamma(\Otx)$ is nowhere vanishing,
we can set $\lambda=h(a_1)/a_1 \in \Gamma(\Otx)\simeq \CC \simeq \Gamma(\Otx)^G$,
and thus
$gh(a_1)/g(a_1)= g(h(a_1)/a_1)= h(a_1)/a_1= \lambda$ for any $g\in G$.
Thereby
$\tr(\pi^*(f\circ\bar{\alpha}\circ\beta))= 
-\lambda\tr(\pi^*(f\circ\beta\circ\bar{\alpha}))$,
and then
\[\pi^*\tr(f\circ\bar{\alpha}\circ\beta+ f\circ\beta\circ\bar{\alpha})
= (1-\frac{1}{\lambda})\pi^* \tr(f\circ \bar{\alpha}\circ\beta)\neq 0,\]
since $\lambda\neq 0,1$ by Lemma \ref{lem:FreeAct-a1}.
\end{proof}
Summing up, we have found $\bar{\alpha}$ and $\beta$ in $\Ext^1(E,E)$ such
that $F_f(\bar{\alpha},\beta)\neq 0$ for $F_f$ at \eqref{eq:Ff}
if $-\frac{1}{d}\chi(E,E)>0$  by Proposition \ref{prop:dimIm}, and
consequently we can find $t_1\in \Ext^1(E,E)$ such that
$F_f(t_1,t_1)=1$.
Then, replacing the argument above from $\Ext^1(E,E)$ to
$\Ker F_f(t_1, \cdot )$, we can find $t_2$ such that
$F_f(t_1, t_2)=0$ and $F_f(t_2, t_2)=1$, when
$-\frac{1}{d}\chi(E,E)\geq 2$.
%
\begin{lem}\label{lem:dimSing}
(a) If $E$ belongs to $\operatorname{Sing} (M(H))$ at Lemma \ref{lem:enough-to-be},
then $E\simeq \pi_*(F_g)$ for all $g\in G$. \\
(b) For any $g\in G$, $F_g$ is $\pi^* H$-stable. \\
(c) When $-\frac{1}{d}\chi(E,E)\geq 3$, the codimension of 
$\operatorname{Sing}(M(H))$ in $M(H)$ is greater than $1$.
\end{lem}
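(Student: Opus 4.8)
For part~(a), the plan is to play the eigenspace decomposition $\pi^*E=\bigoplus_{g\in G}F_g$ of Lemma~\ref{lem:eigen-decomp} against the $G$-action on $\pi^*E$. Since the eigenvalues $g(a_1)$ are pairwise distinct (Lemma~\ref{lem:FreeAct-a1}), $G$ permutes the summands $F_g$ simply transitively, the $G$-linearization of $\pi^*E$ restricting to an isomorphism $g^*F_e\xrightarrow{\sim}F_g$. Applying the exact functor $\pi_*$ gives $\pi_*\pi^*E=\bigoplus_g\pi_*F_g$ with $G$ permuting the summands the same way, and since $\pi$ is an \'etale $G$-torsor, $E\cong(\pi_*\pi^*E)^G$; as an invariant section is determined by its component in the $e$-th summand, the projection $\pi_*\pi^*E\to\pi_*F_e$ restricts to an isomorphism $E\xrightarrow{\sim}\pi_*F_e$. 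Because $\pi\circ g=\pi$ one has $\pi_*F_g\cong\pi_*(g^*F_e)\cong\pi_*F_e\cong E$ for every $g$.

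For part~(b), note that $F_g$ is a direct summand of the $\pi^*H$-semistable sheaf $\pi^*E$ (Lemma~\ref{lem:plbk-stable}) with $P(F_g(n))=P(\pi^*E(n))$ (the remark after Lemma~\ref{lem:eigen-decomp}), hence $\pi^*H$-semistable; I would establish stability by contradiction. If $F_g$ had a proper nonzero subsheaf $A$ with $P(A(n))=P(F_g(n))$ for $n\gg0$, then $\pi_*A\subsetneq\pi_*F_g=E$ is a proper nonzero subsheaf ($\pi_*$ being exact and faithful on coherent sheaves), and since pushforward along the finite $\pi$ multiplies ranks by $d$ and preserves Euler characteristics (Leray and the projection formula), $P((\pi_*A)(n))=\tfrac1d P(A(n))=\tfrac1d P(F_g(n))=P(E(n))$ for $n\gg0$, contradicting the stability of $E$. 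Thus $F_g$ is $\pi^*H$-stable on the K3 or abelian surface $\tilde X$.

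For part~(c), the plan is to bound $\dim\Sing(M(H))$ from above by the dimension of a moduli scheme on $\tilde X$ and $\dim M(H)$ from below by its expected dimension. By (a)--(b), and local constancy of the Mukai vector, on the finitely many components of $\Sing(M(H))$ through $[E]$ every point is $[\pi_*F]$ for $[F]$ in some $M^s_{\tilde X}(\mathbf v)$ with $\mathbf v$ in the $G$-orbit of $\mathbf v(F_e)$ (all of the same dimension, $\langle\cdot,\cdot\rangle$ being $G$-invariant); moreover $F\mapsto\pi_*F$ is quasi-finite where $\pi_*F$ is stable, since $\pi_*F\cong E$ forces $F\hookrightarrow\pi^*\pi_*F=\bigoplus_gF_g$ and hence $F\cong F_g$ for one of the $d$ summands (a nonzero map of $\pi^*H$-stable sheaves of equal reduced Hilbert polynomial is an isomorphism). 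As $K_{\tilde X}=\Otx$, each such $M^s_{\tilde X}(\mathbf v)$ is smooth of dimension $\ext^1_{\tilde X}(F_e,F_e)$, so $\dim_{[E]}\Sing(M(H))\le\ext^1_{\tilde X}(F_e,F_e)$. On the other side $\ext^2(E,E)^\circ=1$, so by Fact~\ref{fact:moduli-lci} the local ring of $M(H)$ at $[E]$ is the hypersurface $\CC[[t_1,\dots,t_n]]/(F_1)$ with $n=\ext^1_X(E,E)$, hence equidimensional of dimension $\ge n-1$. Finally, Lemma~\ref{lem:ExtG} gives $\Ext^1_X(E,E)=\Ext^1_{\tilde X}(\pi^*E,\pi^*E)^G$, and in $\Ext^1_{\tilde X}(\pi^*E,\pi^*E)=\bigoplus_{j,k}\Ext^1_{\tilde X}(F_j,F_k)$ the group $G$ permutes $\{\Ext^1(F_g,F_g)\}_g$ and, for our fixed $h$, $\{\Ext^1(F_g,F_{gh})\}_g$ each simply transitively; these being disjoint blocks when $h\ne e$, taking invariants yields $\ext^1_X(E,E)\ge\ext^1_{\tilde X}(F_e,F_e)+\ext^1_{\tilde X}(F_e,F_h)$. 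Choosing $h\ne e$ as in Proposition~\ref{prop:dimIm}, the relations \eqref{eq:alpha-seisitu}, read componentwise in $\bigoplus_{j,k}\Ext^1(F_j,F_k)$, confine the image of \eqref{eq:Def-Im} to the single block $\Ext^1_{\tilde X}(F_e,F_h)$ (the first relation kills all $(j,k)$ with $j\ne e$, the second all with $k\ne h$), so $\ext^1_{\tilde X}(F_e,F_h)\ge-\tfrac1d\chi(E,E)\ge3$. Combining the three estimates,
\[
\dim_{[E]}M(H)-\dim_{[E]}\Sing(M(H))\ \ge\ (n-1)-\ext^1_{\tilde X}(F_e,F_e)\ \ge\ \ext^1_{\tilde X}(F_e,F_h)-1\ \ge\ 2,
\]
so $\Sing(M(H))$ has codimension greater than $1$ in $M(H)$.

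The main obstacle is the final step of (c): matching the subspace of $\Ext^1_{\tilde X}(\pi^*E,\pi^*E)$ furnished by Proposition~\ref{prop:dimIm} with a single block $\Ext^1_{\tilde X}(F_e,F_h)$ of $\bigoplus_{j,k}\Ext^1(F_j,F_k)$, and bookkeeping the $G$-action on this decomposition precisely enough that that block and the diagonal block $\Ext^1_{\tilde X}(F_e,F_e)$ are seen to inject independently into $\Ext^1_X(E,E)$. Verifying that $F\mapsto\pi_*F$ really defines a morphism on the stable locus (flatness of $\pi_*$) and that its image covers $\Sing(M(H))$ near $[E]$ is routine, but must be carried out.
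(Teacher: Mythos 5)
Your proposal is correct and follows the same overall strategy as the paper (identify $\operatorname{Sing}(M(H))$ with sheaves pushed forward from $\tilde X$, then compare dimensions), but each part is executed by a genuinely different mechanism. For (a), the paper obtains an injection $E\rightarrow\pi_*(F_e)$ and proves it is an isomorphism by computing Chern characters via Grothendieck--Riemann--Roch, whereas you use $E\cong(\pi_*\pi^*E)^G$ and the simply transitive permutation of the summands $\pi_*F_g$; both work, and yours avoids the Chow-group computation. For (b), the paper descends the $G$-orbit $\oplus_g g^*(F')$ of a destabilizer to a destabilizer of $E$, while you push a destabilizer forward along $\pi$; again equivalent. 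The real divergence is in (c): the paper bounds $\ext^1_{\tilde X}(F_e,F_e)\leq -\chi(E,E)/d+2$ using the Hodge-index inequality \eqref{eq:ext-Fe-FeFg} and compares with $\dim M(H)\geq -\chi(E,E)+1$, getting $\tfrac{1-d}{d}\chi(E,E)-1\geq 3(d-1)-1\geq 2$; you instead prove $\ext^1_X(E,E)\geq\ext^1_{\tilde X}(F_e,F_e)+\ext^1_{\tilde X}(F_e,F_h)$ by taking $G$-invariants of the block decomposition $\Ext^1(\pi^*E,\pi^*E)=\oplus_{j,k}\Ext^1(F_j,F_k)$ and use $\ext^1(F_e,F_h)\geq -\chi(E,E)/d\geq 3$. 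Your route bypasses the Hodge-index step entirely and gives the cleaner chain $(n-1)-\ext^1(F_e,F_e)\geq\ext^1(F_e,F_h)-1\geq 2$; the cost is the extra bookkeeping of the $G$-action on the blocks, which you do carry out correctly (the eigenvalue relations \eqref{eq:alpha-seisitu} do confine classes to the $(e,h)$ block because $\pi^*f$ acts on $F_j$ as multiplication by $j(a_1)$). Two small points to tidy up: the paper disposes of the case $d=1$ explicitly by citing Mukai's smoothness result before choosing $h\neq e$, which you should also do (your choice of $h\neq e$ presupposes $d\geq 2$); and your upper bound $\dim\operatorname{Sing}(M(H))\leq\ext^1_{\tilde X}(F_e,F_e)$ rests on the same informal ``moduli count'' as the paper's, so the issues you flag as routine (that $F\mapsto\pi_*F$ is a quasi-finite morphism onto a neighborhood of $[E]$ in $\operatorname{Sing}(M(H))$, and that only finitely many Mukai vectors $\mathbf v$ occur) are exactly where the remaining work lies in either version.
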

\begin{proof}
Since $\oplus_{g\in G} F_g \simeq \pi^* E$, we have
an injective map $E\rightarrow \pi_*(F_e) $. 
Applying the Grothendieck-Riemann-Roch theorem and projection formula to $\pi_*(F_e)$,
we have $d\ch(E)\equiv \sum_g\ch(\pi_*(F_g)))= d\ch(\pi_*(F_e))$ 
in $\operatorname{CH}(X)_{\QQ}$,
since $F_g=g^*(F_e)$ for all $g$.
Thus the injective map $E\rightarrow \pi_*(F_e) $ is isomorphic.
Clearly $\pi_*(F_g)=\pi_*(F_e)$, so we get (a).
Next, $F_e$ satisfies $P(F_e(n\pi^*H))=P(\pi^*E(nH))$ by Lemma \ref{lem:plbk-stable}.
If $F_e$ is not $\pi^* H$-stable, then there is a proper subsheaf $F'\subset F_e$
such that $P(F'(n\pi^*H))=P(\pi^* E(nH))$. Then $\oplus_{g\in G} g^*(F') \subset \pi^*(E)$
descends to a proper subsheaf of $E$, which becomes a $H$-destabilizer of $E$.
This is contradiction and deduces (b).
Next, as to (c), we can assume $d\geq 2$, for if $d=1$ then $K_X$ is trivial
and then $M(H)$ is nonsingular by \cite{Mk:symplectic}.
From (a), the moduli number of
$\operatorname{Sing}(M(H))$ is not greater than
$\ext^1_{\tilde{X}}(F_e,F_e)= -\chi(F_e,F_e)+2$, which is
not greater than $-\chi(E,E)/d +2$ from equations \eqref{eq:ext-E-FeFg} and
\eqref{eq:ext-Fe-FeFg} below.
Thereby when $-\frac{1}{d}\chi(E,E)\geq 3$ we can check that
\begin{multline*}
 \dim M(H)-\dim\operatorname{Sing}(M(H))\geq \ext^1(E,E)-\ext^2(E,E)-\ext^1(F_e,F_e)\\
\geq -\chi(E,E)+1-2+ \frac{1}{d}\chi(E,E) = \frac{1-d}{d}\chi(E,E)-1 \geq 3(d-1)-1\geq 2
\end{multline*}
and this leads to (c).
\end{proof}
In such a way, we can describe $F^{\vee}_f$ at \eqref{eq:Ffvee} as
$F^{\vee}=t_1^2+\dots + t_N^2$ with $N\geq 3$ when
$-\frac{1}{d}\chi(E,E)\geq 3$.
Therefore Theorem \ref{thm:main} follows
from Lemma \ref{lem:enough-to-be}, Lemma \ref{lem:dimSing} 
and Proposition \ref{prop:dimIm}.
We know that
some positive multiple of the canonical class of $M(H)$ 
equals ${\mathcal O}_{M(H)}$ by Grothendieck-Riemann-Roch's theorem, 
so we also have Corollary \ref{cor:main}.\\
  \par
{\it Proof of Proposition \ref{prop:dimIm}:}
From Lemma \ref{lem:eigen-decomp},
\begin{equation}\label{eq:ext-E-FeFg}
 |G| \chi(E,E) = \chi(\pi^*E, \pi^*E)= \sum_{g\in G}\sum_{g'\in G}
\chi(F_{g'}, F_g)= |G|\sum_{g\in G} \chi(F_e, F_g),
\end{equation}
and so $\chi(E,E) =\sum_g \chi(F_e, F_g)$.
From the Riemann-Roch theorem, we have
\begin{multline}\label{eq:FeFg}
\chi(F_e, F_g) =
(r(F_g)c_1(F_e)^2+r(F_e)c_1(F_g)^2)/2 \\
-r(F_e)c_2(F_g)-r(F_g)c_2(F_e) -c_1(F_e)c_1(F_g)  +r(F_e)r(F_g)\chi(\Otx).
\end{multline}
Since $F_g=g^*(F_e)$, it holds that $r(F_e)=r(F_g)$, $c_1(F_e)^2=c_1(F_g)^2$,
and $c_2(F_e)=c_2(F_g)$.
By the Hodge index theorem
$(c_1(F_e)-c_1(F_g))^2=c_1(F_e)^2+c_1(F_g)^2-2c_1(F_e)c_1(F_g)=
2(c_1(F_e)^2-c_1(F_e)c_1(F_g))\leq 0$, and hence \eqref{eq:FeFg} says that
\begin{equation}\label{eq:ext-Fe-FeFg}
 \chi(F_e,F_e)-\chi(F_e,F_g)= -c_1(F_e)^2+c_1(F_e)c_1(F_g) \geq 0.
\end{equation}
Then some $h\neq e$ satisfies that $\chi(F_e,F_h)\leq \chi(E,E)/d$;
otherwise, all $g$ satisfy that $\chi(F_e, F_g)> \chi(E,E)/d$, 
which contradicts to the fact that
$\chi(E,E) =\sum_g \chi(F_e, F_g)$.
For such $h\neq e$, 
$\ext^1(F_e,F_h)\geq -\frac{1}{d}\chi(E,E).$
%
As to the homomorphism \eqref{eq:Def-Im}, remark that $Q_e=F_e(K_X)$ and
that exact sequences
\eqref{eq:1st-exseq1} and \eqref{eq:1st-exseq2} split since $\pi^*(K_X)$
is trivial. Therefore the following holds:
\begin{multline*}
 \dim\IIm( \Ext^1(Q_e(-K_X), F_h) \rightarrow \Ext^1(\pi^*E, \pi^* E)) \\
 \geq \ext^1(Q_e(-K_X), F_h)= \ext^1(F_e, F_h)\geq  -\frac{1}{d}\chi(E,E).
%
\end{multline*} 



\providecommand{\bysame}{\leavevmode\hbox to3em{\hrulefill}\thinspace}
\providecommand{\MR}{\relax\ifhmode\unskip\space\fi MR }
\providecommand{\MRhref}[2]{%
  \href{http://www.ams.org/mathscinet-getitem?mr=#1}{#2}
}
\providecommand{\href}[2]{#2}

\end{document}